\documentclass{article}
\usepackage{graphicx} 
\usepackage{xcolor}
\usepackage{amsmath}
\usepackage{amsfonts}
\usepackage{subfigure}
\usepackage{caption}
\usepackage{url}
\usepackage{hyperref}
\usepackage{amsthm}
\newtheorem{theorem}{Theorem}[section]
\newtheorem{lemma}{Lemma}[section]
\newtheorem{rmk}{Remark}[section]
\newtheorem{corollary}{Corollary}[section]
\newtheorem{definition}{Definition}[section]
\newtheorem{example}{Example}[section]
\def\strutdepth{\dp\strutbox}
\def \BVR#1{\strut\vadjust{\kern-\strutdepth\vtop to0pt{\vss\hbox to\hsize {\hskip\hsize\hskip5pt$\leftarrow$\hss\strut}}}{\em \textcolor{blue}{#1}}}

\usepackage[
backend=biber,
style=numeric,
sorting=none
]{biblatex}
\title{Global Dynamics of Trait-Structured Generalised Lotka-Volterra Systems with Trait-Independent Interactions}
\author{
  Nataliya Balabanova\\
\texttt{nbalabanova@lincoln.ac.uk}
  \and
  Manh Hong Duong\\
  \texttt{h.duong@bham.ac.uk}
\and
 Blaine Van Rensburg\\
\texttt{blaine.vanrensburg@abdn.ac.uk}
}
\date{July 2026}

\begin{document}

\maketitle
\begin{abstract}
We study the long time dynamics of a selection-mutation integro-differential Lotka-Volterra system of $N$ populations. In our model, fitness depends on a continuous phenotypic trait, but the effect of one population on another is independent of this trait. We establish that, under some usual assumptions on the interactions between populations, the long time behaviour of solutions is exactly determined by the well studied Generalised Lotka-Volterra (ordinary differential) Equation. We also show that, all else being equal, the minimal mutation rate is selected for in a static environment, whereas in an a changing environment, an intermediate mutation rate could be selected instead. The key step in our proofs is establishing that the total population sizes are asymptotically governed by a Generalised Lotka-Volterra Equation, which then allows the use of a general result on asymptotically autonomous dynamical systems.  
\end{abstract}
\section{Introduction}

\subsection{Motivation}

Understanding the coexistence of species is a central problem in mathematical biology and theoretical ecology, concerned with explaining how multiple species persist despite competition for limited resources. Early work by Alfred J. Lotka and Vito Volterra \cite{lotka1925elements,volterra1926fluctuations} introduced dynamical systems approaches to interspecific interactions, laying the foundation for the modern theory of ecological competition and predation. These ideas were sharpened by the competitive exclusion principle associated with G. F. Gause, which showed that species competing for identical resources cannot stably coexist in homogeneous environments \cite{gause1934struggle}. This classical perspective led to extensive mathematical investigation of mechanisms that relax exclusion, including resource partitioning, spatial structure, and temporal environmental variability. Subsequent developments in structured and spatial ecology, notably in the work of Robert MacArthur and Richard Levins, demonstrated how heterogeneity in resources, habitats, and dispersal can generate coexistence in systems that would otherwise be unstable under mean-field assumptions \cite{macarthur1967limiting}. Modern coexistence theory, particularly the framework developed by Peter Chesson, unifies these perspectives by distinguishing stabilizing mechanisms that promote negative frequency dependence from equalizing mechanisms that reduce fitness differences among species, providing a general mathematical language for biodiversity maintenance \cite{chesson1985coexistence,chesson2000mechanisms}.

In parallel with these ecological developments, mathematical biology has increasingly emphasized the role of intra-specific variation in shaping population and community dynamics. Trait-based and structured population models show that ignoring phenotypic heterogeneity can lead to qualitatively incorrect predictions about stability, resilience, and coexistence. In particular, continuous trait variation can generate emergent niche structure, alter effective interaction strengths, and couple ecological and evolutionary timescales through mutation and selection \cite{des2018ecological}. These ideas connect to broader frameworks in adaptive dynamics and eco-evolutionary theory, where feedbacks between trait evolution and ecological interactions can either promote or inhibit coexistence depending on the structure of fitness landscapes and competition kernels.

Motivated by these developments, in this work, we investigate the role of intra-specific trait variation in species coexistence using an integro-differential equation framework. We consider $N$ interacting populations under three main assumptions: (1) each population is structured by an inheritable continuous trait which, together with a temporally inhomogeneous environment, determines fitness via a trait-dependent intrinsic growth rate; (2) reproduction is subject to mutation producing offspring with nearby trait values, inducing diffusion in trait space; and (3) interspecific interactions are pairwise and depend only on total population densities, representing mean-field competition. Within this setting, we aim to characterize how trait-mediated heterogeneity influences the conditions for long-term coexistence across multiple interacting species.

More precisely, we consider the co-evolutionary dynamics of $N$ species whose  population trait-density vector $(u_1(x_1,t),...,u_N(x_N,t))$  is governed by a system of the form
\begin{equation}\label{eqn:General}
      \begin{cases}
          \partial_{t}u_i=d_i\Delta{u}_i+u_i\left(r_i(x_i,t)-\sum_{j=1}^Na_{ij}\rho_{j}(t)\right),&(x_i,t)\in\Omega_i\times\mathbb{R}^{+},
    \\
    
    \partial_{\nu}u_i=0,&(x_i,t)\in\partial{\Omega_i}\times\mathbb{R}^{+},\\
    \rho_i(t)=\int_{\Omega}u_i(x,t)dx,&t\in\mathbb{R}^{+},\\
    
    u_i(x,0)=u_{i,0}(x),&x_i\in\Omega_i,
      \end{cases}
  \end{equation}
  where  $i\in\{1,...,N\}$. For the $i$-th species, $\Omega_i\subset\mathbb{R}^n$ is its trait-domain (which is either connected and bounded with smooth boundary, or is the whole space), $d_i>0$ is its mutation rate, $a_{ii}>0$ is the strength of interspecies competition, and $r_i(x_i,t)\in{}C^{2,1}(\Omega_{i}\times\mathbb{R}^{+})$ is the intrinsic birth rate which depends on external environmental factors. For the initial data, we assume that $0\leq{}u_{i,0}(x)\leq{}Ae^{-B|x|}$, and that none of the $u_{i,0}$ are identically $0$. The interactions between populations are determined entirely by the coefficients of the matrix $A=(a_{ij})_{i,j\in\{1,...,N\}}$.  
  
The system \eqref{eqn:General} belongs to a broad class of integro-differential selection--mutation models that provide a general framework for studying the survival and adaptation of populations undergoing Darwinian evolution. Models of this type arise in a variety of applications, including the prediction of extinction driven by environmental change \cite{roques2020adaptation,iglesias2021selection,adaptivedynamicsdivergingfitness}, the study of drug resistance in cancer cell populations \cite{busse2016mass}, and the description of co-evolutionary dynamics in interacting cell populations \cite{nguyen2019adaptive}. Various mathematical properties of selection--mutation models have been extensively studied \cite{barles2009concentration,desvillettes2008selection,alfaro2014explicit,lorenzi2020asymptotic}. Related multi-species advection-reaction--diffusion systems, involving diffusion in physical space rather than in trait space, have also attracted considerable attention; see, for example, \cite{dockery1998evolution,potts2019spatial,giunta2022local,giunta2022detecting,duong2025multi}. A similar model except without mutation is studied in \cite{pouchol2018global}, where a  sufficient conditions on the interaction matrix $A$ are obtained such that all species coexist. To the best of our knowledge, the specific model \eqref{eqn:General} that takes into account both mutation and selection mechanisms {combined with Lotka-Volterra dynamics} has not yet been studied in the literature.

\subsection{Summary of main results}

The aim of this paper is to provide a rigorous analysis of the system \eqref{eqn:General}, which poses significant challenges due to its integro-differential nature and nonlocal structure. Our key observation is that, by integrating \eqref{eqn:General}, we obtain the following equation for the evolution of the total population mass $\rho(t)=(\rho_1(t),...,\rho_N(t))$:
  \begin{equation}\label{eqn:GLV_P}
    \begin{cases}
        \frac{d\rho_i}{dt}=\rho_i\left(\bar{r}_{i}(t)-\sum_{i=1}^{N}a_{ij}\rho_j\right),\\
        \rho_i(0)=\int_{\Omega_i}u_{i,0}(y)\, dy,
    \end{cases}
\end{equation}
where $\bar{r}_i(t)=\int_{\Omega_i}\frac{u_i(x,t)}{\rho_i(t)}r_i(x_i,t)$ represents the average fitness of species $i$ at time $t$. 

The system \eqref{eqn:GLV_P} has the same form as the celebrated generalised Lotka-Volterra Equation,
\begin{equation}\label{eqn:GLV_ODE_generic}
    \frac{d\rho_i}{dt}=\rho_i\left(\bar{r}_{i,\infty}-\sum_{i=1}^Na_{ij}\rho_j\right),~~i=1,...,N,
\end{equation}
but with time-dependent reproduction rates (although note that for each $i\in\{1,...,N\}$, the function $\bar{r}_i(t)$ depends implicitly on $u_i$). If the convergence $\bar{r}_i(t)\xrightarrow[t\rightarrow\infty]{}\bar{r}_{i,\infty}$ holds for some constant $\bar{r}_{i,\infty}$, as is the case under some general assumptions on $A$, then we write $\bar{r}_i(t)=\bar{r}_{i,\infty}+\alpha_i(t)$ so that $\alpha_i(t)\xrightarrow[t\rightarrow\infty]{}0$ and the system \eqref{eqn:GLV_P} can be viewed as a perturbation of \eqref{eqn:GLV_ODE_generic}. 

Under appropriate conditions on $A$, we are able to control the perturbation $\alpha(t)=(\alpha_1(t),...,\alpha_N(t))$ and therefore determine the global dynamics of two important classes of submodels of \eqref{eqn:General} in terms of the global dynamics of \eqref{eqn:GLV_ODE_generic}. The first submodel (see system \eqref{eqn:MutationEvolution} below) is for bounded trait domains and time-independent birth rates, while the second one (see system \eqref{eqn:ShiftingEnvironment} below) is for one-dimensional traits, where the optimal trait for each species shifts due to a changing environment.

Our main results are Theorem \ref{thm:Global_attractors_BD} and Theorem \ref{thm:Global_attractors_UBD} which provide conditions under which the asymptotic behaviour (e.g., eventual coexistence, extinction of populations) of solutions to \eqref{eqn:General} is the same as the asymptotic behaviour of solutions to \eqref{eqn:GLV_ODE_generic}, respectively for the aforementioned two classes of submodels. More precisely, we show that if any of the following assumptions holds
\begin{enumerate}
    \item[(a)] there is a coexistence state of \eqref{eqn:GLV_ODE_generic} which is stable in a particular sense (to be defined in the next section),
    \item[(b)] interactions between the populations are either all competitive, or all mutualistic, and there is a unique coexistence state,
    \item[(c)] interactions are competitive and uniform in the sense that for a given population, all other populations interact with the same strength,
\end{enumerate}
then the fate of the interacting trait-structured populations is the same as that for a population without trait-structure and mutations, but with an intrinsic birth rate which encodes those properties. In other words, the long time dynamics of solutions to \eqref{eqn:General} are the same as those for a solution to \eqref{eqn:GLV_ODE_generic} given some usual assumptions on  the interaction matrix $A$. A corollary of the result in the case of uniform competition (iii) is that, when all else is equal, the minimal mutation rate is selected for in a temporally constant environment, while if the changing environment causes the optimal traits to shift sufficiently fast, an intermediate mutation rate is selected instead.

In addition, in Lemma \ref{lem: dynamics LV} we further study trajectorial properties of the non-autonomous generalised Lotka-Volterra equation \eqref{eqn:GLV_ODE_generic}. Furthermore, we also demonstrate analytical results by numerical simulations.   
\subsection{Organization of the paper}
In Section \ref{sec:ModelsStatements} we introduce the specific models (of the form \eqref{eqn:General}), present  the precise statements of Theorems \ref{thm:Global_attractors_BD} and \ref{thm:Global_attractors_UBD}, and provide additional remarks on their biological interpretation, before comparing our result to related models. Then, in Section \ref{sec:Proofs} we provide the detailed proofs of the main results. A delay effect that arises in generic perturbed systems is investigated in Section \ref{sec:RemarksDynamics}, and illustrated numerically. 

\section{Models and statement of results}\label{sec:ModelsStatements}
In this section, we introduce in details the two classes of models that we consider and state the main results obtained for them.
\subsection{Time-independent intrinsic birth rates}
We first consider the case where each of the intrinsic birth rate is independent of $t$, $r_i(x_i,t)=r_i(x_i)$, and the trait-domain $\Omega_i$ is a smooth and bounded for $i\in\{1,...,N\}$. In this case, the model reads
\begin{equation}\label{eqn:MutationEvolution}
      \begin{cases}
          \partial_{t}u_i=d_i\Delta{u}_i+u_i\left(r_i(x_i)-\sum_{j=1}^Na_{ij}\rho_{j}(t)\right),&(x_i,t)\in\Omega_i\times\mathbb{R}^{+},
    \\
    
    \partial_{\nu}u_i=0,&(x_i,t)\in\partial{\Omega_i}\times\mathbb{R}^{+},\\
    \rho_i(t)=\int_{\Omega}u_i(x,t)dx,&t\in\mathbb{R}^{+},\\
    
    u_i(x,0)=u_{i,0}(x),&x_i\in\Omega_i.
      \end{cases}
  \end{equation}
We assume that there is one interior global maximum for each $r_i$ and that $r_{i,M}=\max_{x\in\Omega_i}r_i(x)>0$ (since otherwise the species goes extinct). We also suppose that the instinct birth rate $r_i(x)$ is defined for $x\in\mathbb{R}^n$ since it aids the interpretation of our results later. Our main theorem relates the global dynamics of \eqref{eqn:MutationEvolution} to those of the generalised Lotka-Volterra system
\begin{equation}\label{eqn:GLV_1}
    \frac{d}{dt}\hat{\rho}_i^ {}=\hat{\rho}_{i}^{}\left(-\lambda_i-\sum_{j=1}^{N}a_{ij}\hat{\rho}_j^{}\right),~~\text{for }~i\in\{1,...,N\},
\end{equation}
for some suitable $\lambda_i\in \mathbb{R}$. To state the main result, we first carry out some formal computations in order to introduce relevant systems, which we will use to prove the existence of a unique classical solution to \eqref{eqn:MutationEvolution}. We define the transformed variables \begin{equation}\label{eqn:DecouplingEqn}
    \tilde{u}_i(x,t)=u_i(x,t)e^{\int_{0}^ t\sum_{j=1}^Na_{ij}\rho_j(s)ds+\lambda_it},
\end{equation}
where, for $i=1,\ldots, N$, the parameter $\lambda_i$ is the principal eigenvalue corresponding to the principal eigenfunction $p_i\in{}C^{2,1}_{\text{loc}}(\Omega_i)$ which satisfies
\begin{equation}\label{eqn:eigenfunctionsDomain}
      \begin{cases}
        -d_i\Delta{}p_i=p_i\left(r_i(x)+\lambda_i\right),&x\in{\Omega_i},
    \\
    \partial_\nu{}p_i=0,&x\in\partial{\Omega}_i,\\
   p_i>0,&x\in\overline{\Omega}_{i},\\
   \int_{\Omega_i}p_i(x)dx=1.
      \end{cases}
  \end{equation}

If $u_i(x,t)$ is a classical solution to \eqref{eqn:MutationEvolution}, then by differentiating $\tilde{u}(x,t)=(\tilde{u}_1(x,t),...,\tilde{u}_N(x,t))$ with respect to $t$ we obtain
\begin{align*}
\partial_{t}\tilde{u}_i&=\partial_{t}u_i\Big(e^{\int_{0}^ t\sum_{j=1}^Na_{ij}\rho_j(s)ds+\lambda_it}\Big)+\Big(\sum_{j=1}^Na_{ij}\rho_j(s)ds+\lambda_i\Big)u_ie^{{\int_{0}^ t\sum_{j=1}^Na_{ij}\rho_j(s)ds+\lambda_it}}\\
&=d_i\Delta{}\tilde{u}_i+\tilde{u}_i\Big(r_i(x_i)-\sum_{j=1}^Na_{ij}\rho_j(t)\Big)+\tilde{u}_i\Big(\sum_{j=1}^Na_{ij}\rho_j(t)+\lambda_i\Big)\\
    &=d_i\Delta{\tilde{u}_i}+\tilde{u}_i(r(x_i)+\lambda_i)
\end{align*}
so that $\tilde{u}_i$ is a classical solution to the following \textit{linear and decoupled} system 
\begin{equation}\label{eqn:Decoupled}
      \begin{cases}
          \partial_{t}\tilde{u}_i=d_i\Delta{\tilde{u}}_i+\tilde{u}_i\left(r_i(x_i)+\lambda_i\right),&(x_i,t)\in\Omega_i\times\mathbb{R}^{+},
    \\
    
    \partial_{\nu}\tilde{u}_i=0,&(x_i,t)\in\partial{\Omega_i}\times\mathbb{R}^{+},\\
    
    u_i(x,0)=u_{i,0}(x),&x_i\in\Omega_i.
      \end{cases}
  \end{equation}
We note that the eigenvalue problem \eqref{eqn:eigenfunctionsDomain} that determines the eigenfunction-eigenvalue pair $(p_i,\lambda_i)$ is precisely the stationary equation of the parabolic equation \eqref{eqn:Decoupled}. Their existence  follows from the theory for elliptic linear operators (reviewed in \cite{lam2022introduction}, Section 1.3), while existence of a classical solution to \eqref{eqn:Decoupled} follows from the theory for linear parabolic equations (see e.g., \cite{evans2022partial}, Chapter 7). The existence of a unique solution $u$ to \eqref{eqn:MutationEvolution} such that $u_i\in{}C^{2,1}(\Omega\times\mathbb{R}_{>0})\cap{}C^{1,0}(\bar{\Omega}\times[0,\infty))$ (for $i\in\{1,...,N\}$) is obtained next.

Integration of \eqref{eqn:DecouplingEqn} with respect to $x$ yields
\begin{align}\label{eqn:MassRelation}
    \tilde{\rho}_i(t)=\rho_i(t)e^{\int_{0}^t\sum_{j=1}^Na_{ij}\rho_j(s)ds+\lambda_it},~t\in[0,\infty),
\end{align}
where $\tilde{\rho}_j=\int_{\Omega_i}\tilde{u}_j(x,t)dx$, and $\tilde{u}=(\tilde{u}_1,...,\tilde{u}_N)$ is the solution to \eqref{eqn:Decoupled}. We claim that there is a unique solution $\rho$ to \eqref{eqn:MassRelation} defined for $t\in\mathbb{R}_{>0}$, such that $\rho_i\in{}C^1(\mathbb{R}_{>0})\cap{}C^0([0,\infty))$. In this case, we define
$u_i(x,t):=\tilde{u}_i(x,t)e^{-\int_{0}^ta_{ij}\rho_j(s)ds-\lambda_it}$ so that $u=(u_1,...,u_N)$ is the unique (classical) solution to \eqref{eqn:MutationEvolution}. The next lemma makes this precise.
\begin{lemma}\label{lma:MassEquivalent}
Assume that any solution $\hat{\rho}$ to 
\eqref{eqn:GLV_1}
with non-negative initial condition exists for all time $t\in\mathbb{R}_{>0}$. Also, assume that
\begin{itemize}
   \item[(G)] {The solution to}
        \begin{align*}
        \frac{d}{dt}\rho_i(t)=\rho_i(t)\left(-\lambda_i-\sum_{j=1}^Na_{ij}\rho_j(t)+\alpha_i(t)\right),
    \end{align*} 
     {is bounded for all $t\geq{0}$, where $\alpha_i(t):=-\frac{d\tilde{\rho}_i(t)}{dt}\frac{1}{\tilde{\rho}_i(t)}$, $\tilde{u}=(\tilde{u}_1,...,\tilde{u}_N)$ is the solution to \eqref{eqn:Decoupled}, and $\tilde{\rho}_i=\int_{\Omega_i}\tilde{u}_i(x,t)dx$.}
\end{itemize}
Then there is a unique solution $\rho=(\rho_1,...,\rho_N)$ which satisfies \eqref{eqn:MassRelation} with $\rho_i\in{}C^1(\mathbb{R}_{>0})$. Consequently, there is a unique solution $u=(u_1,...,u_N)$ to \eqref{eqn:MutationEvolution} where  $u_i\in{}C^{2,1}(\Omega\times\mathbb{R}_{>0})\cap{}C^{1,0}(\bar{\Omega}\times[0,\infty))$.
\end{lemma}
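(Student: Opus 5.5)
The plan is to turn the implicit relation \eqref{eqn:MassRelation} into the nonautonomous ODE in assumption (G), solve that ODE, and then recover $u$ from $\tilde u$.

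\emph{Properties of $\tilde\rho_i$.} Because \eqref{eqn:Decoupled} is linear and decoupled, its solution $\tilde u_i$ exists for all time. The parabolic strong maximum principle, together with $u_{i,0}\ge 0$ and $u_{i,0}\not\equiv 0$, gives $\tilde u_i>0$ for $t>0$. Hence $\tilde\rho_i(t)>0$ for all $t\ge 0$. Integrating \eqref{eqn:Decoupled} over $\Omega_i$ and using the Neumann condition gives
\[
\tilde\rho_i'(t)=\int_{\Omega_i}\tilde u_i\,(r_i+\lambda_i)\,dx .
\]
This shows $\tilde\rho_i\in C^1(\mathbb{R}_{>0})\cap C^0([0,\infty))$. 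It follows that $\alpha_i=-\tilde\rho_i'/\tilde\rho_i$ is continuous on $\mathbb{R}_{>0}$. It is also bounded, since $|\alpha_i|\le \|r_i\|_\infty+|\lambda_i|$ on the bounded domain $\Omega_i$, and it is locally integrable near $t=0$.

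\emph{Equivalence of \eqref{eqn:MassRelation} with the ODE in (G).} Suppose $\rho$ is a continuous, $C^1$ solution of \eqref{eqn:MassRelation}. Then $\rho_i>0$, and taking logarithms gives
\[
\log\rho_i(t)=\log\tilde\rho_i(t)-\int_0^t\sum_{j}a_{ij}\rho_j\,ds-\lambda_i t .
\]
Differentiating this identity yields exactly the ODE in (G) with the given $\alpha_i$, and the initial datum is $\rho_i(0)=\tilde\rho_i(0)=\int u_{i,0}$. Conversely, suppose $\rho$ solves the ODE in (G) with this initial datum. Then $\frac{d}{dt}\log\big(\rho_i e^{\int_0^t\sum_j a_{ij}\rho_j+\lambda_i t}/\tilde\rho_i\big)=0$, and the ratio equals $1$ at $t=0$, so \eqref{eqn:MassRelation} holds. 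The problem therefore reduces to global existence and uniqueness for this ODE.

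\emph{Existence and uniqueness for the ODE.} The right-hand side $F(t,\rho)=\rho_i(-\lambda_i-\sum_j a_{ij}\rho_j+\alpha_i(t))$ is continuous in $t$ and locally Lipschitz in $\rho$, uniformly on compact time intervals, because $\alpha$ is bounded. Picard--Lindelöf then gives a unique local solution. This solution stays positive, since each $\rho_i$ is the exponential of an integral. By assumption (G) the solution is bounded, so the standard continuation criterion rules out blow-up and the solution extends to all $t\ge 0$. The hypothesis that solutions of \eqref{eqn:GLV_1} are global serves as the unperturbed analogue of this step.

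\emph{Recovering $u$.} With $\rho$ in hand, define
\[
u_i:=\tilde u_i\,e^{-\int_0^t\sum_j a_{ij}\rho_j\,ds-\lambda_i t}.
\]
The exponential factor is $C^1$ in $t$ and independent of $x$, so $u_i$ inherits the regularity $C^{2,1}(\Omega\times\mathbb{R}_{>0})\cap C^{1,0}(\bar\Omega\times[0,\infty))$ of $\tilde u_i$. Reversing the computation preceding \eqref{eqn:Decoupled} shows that $u$ solves \eqref{eqn:MutationEvolution}. Integrating in $x$ confirms $\int u_i=\rho_i$ by \eqref{eqn:MassRelation}, so the nonlocal term is consistent.

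For uniqueness of $u$, take any classical solution $v$ of \eqref{eqn:MutationEvolution} with masses $\rho^v$. Its transform $\tilde v$ solves the linear problem \eqref{eqn:Decoupled}, which has a unique solution, so $\tilde v=\tilde u$. Then $\rho^v$ satisfies \eqref{eqn:MassRelation} and equals $\rho$ by the uniqueness step above, and hence $v=u$.

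\emph{Main obstacle.} The delicate step is the behaviour at $t=0$. The initial data are only assumed bounded, so $\tilde\rho_i'$, and hence $\alpha_i$, might not be continuous up to $t=0$. If that happens, I would work with the integral form of the ODE, which only needs $\alpha_i\in L^1_{\mathrm{loc}}$ and has the uniform bound noted above, and obtain $\rho\in C^0([0,\infty))\cap C^1(\mathbb{R}_{>0})$ by Carathéodory theory.
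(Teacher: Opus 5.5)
Your proposal is correct and follows essentially the same route as the paper. Both differentiate \eqref{eqn:MassRelation} into the perturbed Lotka--Volterra ODE, apply Picard--Lindel\"of, continue globally via (G), and undo \eqref{eqn:DecouplingEqn}; your converse direction, positivity argument, direct bound $|\alpha_i|\le\|r_i\|_\infty+|\lambda_i|$ (instead of Lemma \ref{lma:AlphaExpDecay}) and explicit uniqueness argument for $u$ add welcome care. One small correction: differentiating $\log\rho_i=\log\tilde\rho_i-\int_0^t\sum_j a_{ij}\rho_j\,ds-\lambda_i t$ actually gives $\alpha_i=+\tilde\rho_i'/\tilde\rho_i$, so the minus sign in the statement is a typo. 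Both you and the paper carry it over, and with it as written your converse step would not close; it is harmless because only boundedness and decay of $\alpha_i$ are ever used.
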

The proof of Lemma \ref{lma:MassEquivalent} will be given in Section \ref{sec: proof21}. In the next remark we comment on assumption (G) used in Lemma \ref{lma:MassEquivalent}.
\begin{rmk}
    The assumption (G) is included only to ensure global existence of a solution. We will verify that (G) is satisfied in each of the cases we consider in Theorem \ref{thm:Global_attractors_BD}. Note that it is \textit{not} in general satisfied even under the natural condition that $a_{ii}>0$ for each $i\in\{1,...,N\}$. Consider, for instance, the system (where $N=2$)
    \begin{align*}
        \frac{d\rho_1}{dt}&=\rho_1(2\rho_2-\rho_1)\\
        \frac{d\rho_2}{dt}&=\rho_2(2\rho_1-\rho_2),
    \end{align*}
    with initial condition $(\rho_1,\rho_2)=(\nu,\nu)$ for $\nu>0$. By symmetry we have that $\rho_1=\rho_2$ on the interval on which solutions exist, but in this case then $\frac{d\rho_1}{dt}=\rho_1^2$ and this solution blows up at $t=\frac{1}{\nu}<\infty$.
\end{rmk}
Let $A=(a_{ij})_{i,j=1}^N$ be a given matrix, and let ${\Lambda}=(\lambda_1,...,\lambda_N)$. Then we have 
the following next lemma, which determines the possible long-term behaviour assuming no species goes extinct or grows without bound.
\begin{lemma}\label{lma:MassBoundSolution}
\hspace{0.1em}
\begin{enumerate}
    \item If $\rho_i(t)$ is bounded above for all $t\geq{0}$ then $\limsup_{t>0}\frac{1}{t}\int_{0}^t\sum_{j=1}^Na_{ij}\rho_j(s)ds=-\lambda_i$ or $\rho_i(t)\xrightarrow[t\rightarrow\infty]{}0$.
    \item If $\rho_i(t)$ is uniformly bounded away from $0$ with respect to $t$, then either $\liminf_{t>0}\frac{1}{t}\int_{0}^t\sum_{j=1}^Na_{ij}\rho_j(s)ds=-\lambda_i$ or $\rho_i$ is unbounded.
    
    \item If $\rho_i(t)$ is bounded, and uniformly bounded away from $0$ with respect to $t$, then $\frac{1}{t}\int_{0}^ t\rho(s)ds\xrightarrow[t\rightarrow\infty]{}\rho^{*}\in\mathbb{R}^N$ which solves the equation \[{A}\rho^{*}=-{\Lambda}.\]
\end{enumerate}
\end{lemma}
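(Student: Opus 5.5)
The plan is to take logarithms in the mass relation \eqref{eqn:MassRelation} and use the long-time behaviour of the linear decoupled problem \eqref{eqn:Decoupled}. Set $I_i(t)=\int_0^t\sum_{j=1}^N a_{ij}\rho_j(s)\,ds$. Then \eqref{eqn:MassRelation} gives
\begin{equation*}
\frac{1}{t}\log\rho_i(t)=\frac{1}{t}\log\tilde\rho_i(t)-\lambda_i-\frac{1}{t}I_i(t).
\end{equation*}

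The first step is to show that $\tilde\rho_i(t)$ converges to a positive constant. This gives $\frac1t\log\tilde\rho_i(t)\to 0$. Since $\Omega_i$ is bounded and smooth with Neumann boundary conditions, and $\lambda_i$ is the principal eigenvalue of \eqref{eqn:eigenfunctionsDomain}, the operator $d_i\Delta+r_i+\lambda_i$ has principal eigenvalue $0$ and a spectral gap. The standard Krein--Rutman/semigroup argument then gives $\tilde u_i(\cdot,t)\to c_i p_i$ uniformly. Here $c_i=\int u_{i,0}\,p_i^*>0$, where $p_i^*$ is the adjoint eigenfunction. Because the operator is self-adjoint, $p_i^*$ is proportional to $p_i$, so $c_i>0$ whenever $u_{i,0}\not\equiv0$. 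Hence $\tilde\rho_i(t)\to c_i>0$. Even without the spectral gap, it suffices to have $C^{-1}\le\tilde\rho_i(t)\le C$ for large $t$. This follows from comparing $\tilde u_i(\cdot,1)$ above and below by multiples of $p_i$ (using the parabolic strong maximum principle to get $\tilde u_i(\cdot,1)>0$ on $\bar\Omega_i$), together with the fact that $p_i$ is stationary for \eqref{eqn:Decoupled}.

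With this, the identity becomes $\frac1t I_i(t)=-\lambda_i-\frac1t\log\rho_i(t)+o(1)$, and the three parts follow from it.

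For part 1, $\rho_i\le M$ gives $\frac1t I_i\ge-\lambda_i-o(1)$, so $\liminf\frac1t I_i\ge-\lambda_i$. If $\limsup\frac1t I_i>-\lambda_i$, then along some sequence $\frac1t\log\rho_i(t)\le-\varepsilon$. For the claim that $\rho_i\to0$ I would argue by the dichotomy: either $\frac1t\log\rho_i\to0$, in which case the limit of $\frac1t I_i$ exists and equals $-\lambda_i$, or $\liminf\rho_i=0$. Since $\rho_i$ is bounded, $\frac{d\rho_i}{dt}/\rho_i=-\lambda_i-\sum_j a_{ij}\rho_j+\alpha_i$ is bounded provided the other $\rho_j$ are bounded. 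I expect this to be the main obstacle. Upgrading ``$\limsup\frac1t I_i>-\lambda_i$'' to $\rho_i\to0$ requires exploiting the sign of the limsup carefully; the honest conclusion I would first aim for is that $\frac1t\log\rho_i\not\to0$, and then interpret ``$\rho_i\to0$'' in the paper's intended sense, possibly adding the uniform Lipschitz control of $\log\rho_i$.

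Part 2 is symmetric: $\rho_i\ge m>0$ gives $\limsup\frac1t I_i\le-\lambda_i$. If $\rho_i$ is also bounded, then $\frac1t\log\rho_i\to0$ and the liminf equals $-\lambda_i$.

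For part 3, boundedness both above and away from zero gives $\frac1t\log\rho_i\to0$ for every $i$. Hence $A\,\frac1t\int_0^t\rho(s)\,ds\to-\Lambda$. The averages $\frac1t\int_0^t\rho$ are bounded, so every limit point $\rho^*$ satisfies $A\rho^*=-\Lambda$. If $A$ is invertible, the limit point is unique and the averages converge to $\rho^*=-A^{-1}\Lambda$. Otherwise I would state convergence of $A\frac1t\int_0^t\rho$ and note that every limit point solves the equation. I would assume invertibility, which is implicit in the statement's ``converges''.
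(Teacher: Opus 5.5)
Your route is the paper's. The paper integrates the comparison $\underline{w}_ip_i\le\tilde u_i\le\overline{w}_ip_i$ of Lemma~\ref{lma:TransformBounds} (proved exactly as in your ``even without the spectral gap'' remark) to get $\underline{w}_i\le\rho_i(t)e^{I_i(t)+\lambda_it}\le\overline{w}_i$, takes logarithms, divides by $t$, and declares the three parts to follow. Your parts 2 and 3 are correct. Your caveat in part 3 is genuinely needed: passing from $A\frac1t\int_0^t\rho\to-\Lambda$ to convergence of the averages requires $A\rho^*=-\Lambda$ to have a unique solution, and the paper uses this silently.

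The obstacle you flag in part 1 is real, and it is a defect of the statement, not of your argument. The paper's proof does not get past it either. Its displayed step is in fact reversed: an upper bound on $\rho_i$ yields the \emph{lower} bound $t(\frac1tI_i+\lambda_i)\ge\log\underline{w}_i-\sup_t\log\rho_i$, after which the dichotomy is simply asserted. The identity gives exactly what you found: $\liminf\frac1tI_i\ge-\lambda_i$ and $\limsup\frac1tI_i=-\lambda_i-\liminf\frac1t\log\rho_i$. So either $\frac1tI_i\to-\lambda_i$, or $\liminf_{t\to\infty}\frac1t\log\rho_i(t)<0$, in particular $\liminf\rho_i=0$.

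A Lipschitz bound on $\log\rho_i$ cannot upgrade this to $\rho_i\to0$. Such a bound still allows $\log\rho_i$ to sink to $-\varepsilon t_n$ and climb back to $O(1)$ within time $O(\varepsilon t_n)$, and this actually happens in the PDE setting. If $u_{i,0}=c_ip_i$, then $\tilde u_i\equiv c_ip_i$ and $\alpha\equiv0$, so $\rho$ is an exact trajectory of \eqref{eqn:GLV_1}. Now take identical species, so all $\lambda_i$ are equal and negative, and a May--Leonard matrix $A$ (all $a_{ij}>0$) with an attracting heteroclinic cycle. Along trajectories attracted to the cycle, each $\rho_i$ is bounded and returns near its single-species level infinitely often. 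Yet, classically, the time averages do not converge. Since $A$ is invertible, for some $i$ you get $\limsup\frac1tI_i>-\lambda_i$ while $\rho_i\not\to0$.

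So you should state part 1 in the weak form. That form suffices where the lemma is used, in case c). With $a_{ij}=a_i$, boundedness of $\rho_{\underline i}$ gives $\frac1t\int_0^t\sum_j\rho_j\ge-\lambda_{\underline i}/a_{\underline i}-O(1/t)$ for \emph{all} large $t$. Inserting this into the identity for $i\neq\underline i$ gives $\frac1t\log\rho_i\le a_i(\lambda_{\underline i}/a_{\underline i}-\lambda_i/a_i)+O(1/t)$, hence exponential decay of $\rho_i$.
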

The proof of Lemma \ref{lma:MassBoundSolution} will be presented in Section \ref{sec: proof21}. For the Generalised Lotka-Volterra model \eqref{eqn:GLV_1} trajectories may be unbounded or species may go extinct depending on the properties of the matrix $A$ and the initial condition. Consequently, we expect the same properties for the perturbed system \eqref{eqn:PopDynFull_1} unless we specify the properties of the matrix $A$ further. Therefore, to make use of Lemma \ref{lma:MassBoundSolution} we restrict to the following situations:
\begin{itemize}
    \item[a)] Lyapunov diagonal stability: there exists a positive diagonal matrix $P$ such that $P^TA+AP^T$ is positive definite. 
    \item[b)] Purely competitive (respectively, cooperative) interactions: $a_{ij}>0$ for $i,j\in\{1,...,N\}$  (respectively, $a_{ij}\leq{0}$ for $i\neq{}j\in\{1,...,N\}$).
    \item[c)] Uniform competitive interactions ( $a_{ij}=a_{i}>0$ for $i,j=\{1,...,N\}$).
\end{itemize}
The first main result of the paper is the following theorem that characterizes the long-time behaviour of the total population mass $\rho(t)=(\rho_1(t),\ldots, \rho_N(t))$ of the system \eqref{eqn:MutationEvolution}.
\begin{theorem}\label{thm:Global_attractors_BD}
Let $u=(u_1,...,u_N)$ be the solution to \eqref{eqn:MutationEvolution} and $\rho_i(x)=\int_{\Omega_i}u_i(x,t)dx$ for $i\in\{1,...,N\}$.

Suppose that:  $\rho^{*}=(\rho_1^*,\ldots, \rho_N^*)$ where $\rho_i^*>0$ for all $i=1,\ldots, N$, is the unique non-trivial equilibrium solution to \eqref{eqn:GLV_1}, $\lambda_i<0$ for all $i=1,\ldots, N$, and one of
\begin{itemize}
    \item[a)]  $A$ is Lyapunov diagonally stable, 
    \item[b)]  $a_{ij}>0$ for all $i,j\in\{1,...,N\}$, or $a_{ij}\leq{}0$ for $i\neq{j}\in\{1,...,N\}$.
\end{itemize}
holds. Then $\rho(t)\xrightarrow[t\rightarrow\infty]{}\rho^{*}$ and $\Vert{}\frac{u_i(\cdot,t)}{\rho_i(t)}-p_i\Vert_{L^\infty(\Omega_i)}\xrightarrow[t\rightarrow\infty]{}0$.

Alternatively, suppose only that

\begin{itemize}
    \item[c)] $a_{ij}=a_i>0$,  for each $i,j\in\{1,...,N\}$, and that there is a unique index $\underline{i}\in\{1,...,N\}$ such that $\frac{\lambda_{\underline{i}}}{a_{\underline{i}}}=\min_{i\in\{1,...,N\}}\frac{\lambda_i}{a_i}<0$.
\end{itemize} 
Then
   \begin{align*}
       &\left\Vert{u_j}\right\Vert_{L^\infty(\Omega_j)}\xrightarrow[t\rightarrow\infty]{}0,~~\text{for }i\neq{}\underline{i}\\
       &\rho_{\underline{i}}(t)\xrightarrow[t\rightarrow\infty]{}-\frac{\lambda_{\underline{i}}}{a_i},~~ \left\Vert{}\frac{u_{\underline{i}}(\cdot,t)}{\rho_{\underline{i}}(t)}-p_i\right\Vert_{L^\infty(\Omega_{\underline{i}})}\xrightarrow[t\rightarrow\infty]{}0.
   \end{align*}

\end{theorem}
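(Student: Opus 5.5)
The approach is to reduce everything to the decoupled linear problem \eqref{eqn:Decoupled} and treat the mass equation as an asymptotically autonomous perturbation of \eqref{eqn:GLV_1}.

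\textbf{Step 1: linear asymptotics.} Since $\Omega_i$ is bounded and $\lambda_i$ is the principal eigenvalue of $d_i\Delta+r_i$ with Neumann conditions, the Krein--Rutman spectral gap gives
\[
\tilde u_i(\cdot,t)\to c_i p_i \quad\text{in } L^\infty(\Omega_i)\ \text{exponentially fast},
\]
with $c_i=\langle u_{i,0},p_i^*\rangle>0$, where $p_i^*$ is the adjoint eigenfunction. Because $\tilde u_i$ and $u_i$ differ only by a $t$-dependent factor,
\[
\frac{u_i}{\rho_i}=\frac{\tilde u_i}{\tilde\rho_i}\to p_i \quad\text{in } L^\infty .
\]
This gives the profile statements in all cases. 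Differentiating $\tilde\rho_i$ and using the equation shows that
\[
\alpha_i(t)=-\frac{1}{\tilde\rho_i}\frac{d\tilde\rho_i}{dt}=-\int_{\Omega_i}\frac{\tilde u_i}{\tilde\rho_i}\,(r_i+\lambda_i)\,dx
\]
tends to $-\int_{\Omega_i} p_i(r_i+\lambda_i)\,dx=0$, the last equality by integrating \eqref{eqn:eigenfunctionsDomain}. The convergence is exponential, so $\alpha_i\in L^1(0,\infty)$. Differentiating \eqref{eqn:MassRelation} then shows that $\rho$ solves the perturbed system in (G), namely
\[
\dot\rho_i=\rho_i\Bigl(-\lambda_i-\sum_j a_{ij}\rho_j+\alpha_i(t)\Bigr).
\]

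\textbf{Step 2: global existence (G) and persistence.} In each case I will build a Lyapunov-type functional for \eqref{eqn:GLV_1} and show it stays bounded along the perturbed flow because $\int_0^\infty|\alpha|<\infty$.
\begin{itemize}
\item In case a), take $V=\sum_i P_i(\rho_i-\rho_i^*-\rho_i^*\log(\rho_i/\rho_i^*))$. Then $\dot V\le -c|\rho-\rho^*|^2+C|\alpha|(1+V)$, and Gronwall gives boundedness of $V$. This yields upper and lower bounds on $\rho$, and then $V\to0$ via the $L^1$ perturbation.
\item In the competitive part of case b), boundedness follows from $\dot\rho_i\le\rho_i(-\lambda_i+\alpha_i-a_{ii}\rho_i)$.
\item In the cooperative part of case b), existence of a positive equilibrium for a Z-matrix $A$ with $A\rho^*=-\Lambda>0$ should make $A$ an M-matrix, hence Lyapunov diagonally stable, which reduces it to case a). If that fails, I would use the comparison principle for cooperative systems with a supersolution $k\rho^*$.
\item In the competitive part of case b) with a unique interior equilibrium, I would invoke the asymptotically autonomous systems theorem (Thieme / Markus). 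The $\omega$-limit set of a bounded trajectory of the perturbed system is a chain-transitive set of \eqref{eqn:GLV_1}. Persistence then comes from Lemma \ref{lma:MassBoundSolution} together with the sign condition $\lambda_i<0$, which rules out the boundary equilibria when the unique interior equilibrium is globally attracting for the limit GLV.
\end{itemize}

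\textbf{Step 3: case c).} Here $\sum_j a_{ij}\rho_j=a_iS$ with $S=\sum_j\rho_j$. From \eqref{eqn:MassRelation},
\[
\frac{\rho_i^{1/a_i}}{\rho_{\underline i}^{1/a_{\underline i}}}=\bigl(\text{bounded ratio of } \tilde\rho\text{'s}\bigr)\cdot\exp\Bigl(-t\Bigl(\frac{\lambda_i}{a_i}-\frac{\lambda_{\underline i}}{a_{\underline i}}\Bigr)\Bigr)\to0 .
\]
The ratio of the $\tilde\rho$'s is bounded because $\tilde\rho_i\to c_i>0$. Also $\rho_{\underline i}$ is bounded via the logistic comparison
\[
\dot\rho_{\underline i}\le\rho_{\underline i}\bigl(-\lambda_{\underline i}+\alpha_{\underline i}-a_{\underline i}\rho_{\underline i}\bigr).
\]
Together these force $\rho_i\to0$ exponentially for $i\neq\underline i$, and then $\|u_i\|_\infty\le\rho_i\|\tilde u_i/\tilde\rho_i\|_\infty\to0$. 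The equation for $\rho_{\underline i}$ is then logistic with an integrable perturbation, so $\rho_{\underline i}\to-\lambda_{\underline i}/a_{\underline i}$.

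\textbf{Main obstacle.} I expect the hardest part to be the competitive but not diagonally stable part of case b): proving global convergence to $\rho^*$ from uniqueness of the interior equilibrium alone. For the unperturbed GLV this is not automatic in general. I would first try to show that, under the sign conditions, the boundary equilibria are saturated-unstable (using $\lambda_i<0$ and uniqueness), and then apply Thieme's asymptotically autonomous result together with Lemma \ref{lma:MassBoundSolution}.
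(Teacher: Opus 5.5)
\textbf{The gap is in the purely competitive half of case b), and it cannot be closed, because the statement is false there.} The rest of your proposal is sound:
\begin{itemize}
\item Step 1 is correct.
\item Case a) works: Gronwall on $V$ with $\int_0^\infty|\alpha|<\infty$ keeps $\rho$ in a compact subset of the open orthant. The paper's direct appeal to Lemma \ref{lma:AsympAut} tacitly needs this and never proves it.
\item The cooperative half of b) works. A Z-matrix with $A\rho^*=-\Lambda>0$ for some $\rho^*>0$ is a nonsingular M-matrix, hence Lyapunov diagonally stable, exactly as in the paper's reduction.
\item Case c) works, via the ratio identity in which $\int_0^t\sum_j\rho_j$ cancels.
\end{itemize}

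For the competitive half of b), your plan cannot succeed. You propose to show the boundary equilibria are saturated-unstable using only $\lambda_i<0$ and uniqueness, and that must fail. Your persistence bullet is circular, since it presupposes that $\rho^*$ attracts every interior orbit of \eqref{eqn:GLV_1}. Chain transitivity of the $\omega$-limit set does not help either, because a stable boundary equilibrium is itself chain transitive.

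Here is a counterexample. Take $N=2$, $r_1=r_2$, $d_1=d_2$ with common principal eigenvalue $\lambda<0$, and $a_{11}=a_{22}=1$, $a_{12}=a_{21}=2$.
\begin{itemize}
\item Then $\rho^*=(-\lambda/3,-\lambda/3)$ is the unique positive equilibrium. The hypothesis must be read as uniqueness of the positive equilibrium, since the single-species equilibria $(-\lambda_i/a_{ii})e_i$ always exist.
\item At $(-\lambda,0)$ the Jacobian of \eqref{eqn:GLV_1} is triangular with both diagonal entries equal to $\lambda<0$, so this boundary equilibrium is locally asymptotically stable.
\item Choose $u_{1,0}=-\lambda p_1$ and $u_{2,0}=\varepsilon p_2$. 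Each $p_i$ is a steady state of \eqref{eqn:Decoupled}, so $\tilde u_i\equiv u_{i,0}$ and $\alpha\equiv0$.
\item Hence $\rho$ solves \eqref{eqn:GLV_1} exactly from $(-\lambda,\varepsilon)$. For small $\varepsilon>0$ it converges to $(-\lambda,0)\neq\rho^*$.
\end{itemize}

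This is the bistable regime $\frac{a_{22}}{a_{12}}<\frac{\bar{r}_2}{\bar{r}_1}<\frac{a_{21}}{a_{11}}$ that the paper's own remark after the Corollary sets aside. The paper's proof has the same hole. In case b) it invokes Lemma \ref{lma:AsympAut}, whose hypothesis that $\rho^*$ is a global attractor of \eqref{eqn:GLV_1} is never verified and is false in this example.

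The competitive half of b) therefore needs an extra hypothesis. Lyapunov diagonal stability would do; for $N=2$ with positive diagonal it is just $a_{11}a_{22}>a_{12}a_{21}$. That returns you to case a), where your argument works.

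A minor point: differentiating \eqref{eqn:MassRelation} gives $\alpha_i=+\tilde\rho_i^{-1}\,d\tilde\rho_i/dt$, not minus. The sign you inherited from the paper is harmless, since only $\int_0^\infty|\alpha_i|<\infty$ is used.
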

The proof of this theorem will be given in Section \ref{sec: proof21}. For each case, once we have shown condition (G) (which says that $\rho$ is bounded, and is assumed in Lemma \ref{lma:MassEquivalent}), we may use general result for asymptotically autonomous ODEs with bounded solutions (which is given in detail in \ref{sec: proof21}) to show that $\rho$ has the same long time dynamics as solutions to its autonomous counterpart \eqref{eqn:GLV_1}. Indeed, this is exactly what we do for case $b)$ (where $a_{ij}>0$ for $i,j\in\{1,...,N\}$), requiring no further assumptions on $A$. The benefit of this approach is that it does not necessarily require the construction of a Lyapunov function. However, in the cases where a Lyapunov function is available, it is used to show (G), at which point it's possible to conclude either directly using the Lyapunov function, or by using the general result for asymptotically autonomous ODEs. 

\begin{rmk}
Theorem \ref{thm:Global_attractors_BD} can be extended to the following class of systems of elliptic equations 
\begin{align*}
    \begin{cases}
        \partial_tu_i=\nabla\cdot(D_i(x_i)\nabla{}u_i)+u_i\left(r_i(x)-\sum_{j=1}^Na_{ij}\rho_j\right),&(x_i,t)\in{\Omega_i}\times\mathbb{R}^+,\\
        \partial_{v}u_i=0,&(x_i,t)\in\partial\Omega_i\times\mathbb{R}^+,\\
        \rho_i(t)=\int_{\Omega_i}u_i(x,t)dx,&t\in\mathbb{R}^+.
    \end{cases}
\end{align*}
where diffusion matrices $D_i(x_i)=(d_{jk}^i(x_i))$ (for $i\in\{1,...,N\}$) are  anisotropic and dependent on trait, provided that each $D_i$ is uniformly elliptic and continuous in $x$, and the domain $\Omega_i$ is connected with sufficiently smooth boundary. In this case, the only step of the proof affected is obtaining the principal eigenvalue-eigenfunction pairs, which would be obtained from the following equations instead 
\begin{equation*}\label{eqn:eigenfunctionsDomainAnistropic}
      \begin{cases}
        -\nabla\cdot(D_i\nabla{}u_i)=p_i\left(r_i(x)+\lambda_i\right),&x\in{\Omega_i},
    \\
    \partial_\nu{}p_i=0,&x\in\partial{\Omega}_i,\\
   p_i>0,&x\in\overline{\Omega}_{i},\\
   \int_{\Omega_i}p_i(x)dx=1.
      \end{cases}
  \end{equation*}

\end{rmk}

In the next remark, we comment on the assumptions made in Theorem \ref{thm:Global_attractors_BD}.
\begin{rmk}
\label{rem: assumptions}
In case a) the Lyapunov diagonal stability condition actually implies that there is a unique globally stable equilibrium with positive components. For b) the existence of such a state is an assumption, while for c) we do not need to assume the existence of such a state (in fact, we show the there is globally stable equilibrium, where all but one component is $0$).
\end{rmk}
We can interpret Theorem \ref{thm:Global_attractors_BD} biologically as follows: in cases a) and b) the total population mass $\rho$ behaves asymptotically the same as for the generalised Lotka-Volterra ODE system, with the same interaction matrix $A$, but where the intrinsic birth rate of species $i$ is determined by the principal eigenvalue $\lambda_i$. In case c) where the effect of species $j$ on $i$ depends only on species $i$, coexistence is typically not possible. Instead, the subpopulation $\underline{i}$, which corresponds to the minimal of \{$\lambda_i/a_i\}$ over all species $i\in\{1,...,N\}$, survives while all others go extinct. One can then interpret $\frac{\lambda_i}{a_i}$ as the fitness of the $i$-th species, and the model therefore exhibits survival of the fittest (Darwin's theory of natural selection).  The dependence of this fitness on trait-dependent birth rate $r_i$, the domain topology, and the diffusion coefficient is made clearer by considering the Rayleigh quotient representation for the principal eigenvalue,
\begin{equation}\label{eqn:Rayleigh}
    \lambda_i(r_i,d_i,\Omega_i)=\inf_{\phi\in{}H^1(\Omega_i)\setminus\{0\}}\frac{d_i\int_{\Omega_i}|\nabla\phi(x)|^2dx-\int_{\Omega_i}r_i(x)\phi(x)^2dx}{\int_{\Omega_i}\phi(x)^2dx}.
\end{equation}

From this formula, we deduce the following statements.
\begin{enumerate}
    \item  Since $\lambda_i(d_i,\Omega_i)$ is increasing in $d_i$, increased mutation rate is detrimental to survival. If we additionally suppose that $a_i=a$, $\Omega_i=\Omega$ and $r_i=r(x)$ for each $i\in\{1,...,N\}$ then this can be interpreted as a competition between $N$ subpopulations which differ only in their mutation rate, and it follows that the subpopulation with smallest mutation rate survives. {This is because the population with a smallest mutation rate is more strongly concentrated around the optimal trait, and therefore has a greater relative fitness. However, a non-zero mutation rate is necessary for that population to generate mass on optimal trait in the first place (e.g., without mutation, it would be impossible to produce offspring with the optimal trait if the initial support did not contain the optimal trait).}

    \item Since for $\Omega\subset\Omega^*$ we have that $\lambda_i(d,\Omega)<\lambda_i(d,\Omega^{*})$, increasing the explored trait-space is always beneficial (or neutral) in the sense that it allows the species to find potentially better traits.
\end{enumerate}
The first point is analogous to the results from the evolution of dispersal  where only the slowest diffusing subpopulation survives in a temporally constant environment \cite{dockery1998evolution,cantrell2021evolution}. In our case, with all else being equal, only the subpopulation with minimal mutation rate survives. We expect that this may differ in the case of periodic environment in line with \cite{liu2022new}, and show that it is different for the second submodel which considers linearly shifting optimal traits.

A corollary of Theorem \ref{thm:Global_attractors_BD} provides the long term behaviour for the $N=2$ case when either species is not viable alone ($\bar{r}_{i,\infty}<0$ for $i=1,2$) or when there is a stable co-existence equilibrium. 

\begin{corollary}
When $N=2$, the long time dynamics of $(\rho_1,\rho_2)$ are as follows.
If there is $i\in\{1,2\}$ such that $\bar{r}_{i,\infty}<0$ then:
\begin{itemize}

    \item  $\rho_i(t)\xrightarrow[t\rightarrow\infty]{}0$. Consequently, for $i'\in\{1,2\}\setminus\{i\}$, if $\bar{r}_{i,\infty}>0$, then $\rho_{i'}(t)\xrightarrow[t\rightarrow\infty]{}\frac{\bar{r}_{i'}}{a_{i'i'}}$. 
    \end{itemize}
Suppose instead that $\bar{r}_{i,\infty}>0$ for $i=1,2$ then:
\begin{itemize}
    \item  If $\frac{a_{21}}{a_{11}}<\frac{\bar{r}_2}{\bar{r_1}}<\frac{a_{22}}{a_{12}}$
    then the solution converges to the unique coexistence state $(\rho_1,\rho_2)\xrightarrow[t\rightarrow\infty]{}\left(\frac{\bar{r}_1a_{22}-\bar{r}_2a_{12}}{a_{11}a_{22}-a_{21}a_{12}},\frac{\bar{r}_2a_{11}-\bar{r}_1a_{21}}{a_{22}a_{11}-a_{21}a_{12}}\right)$.

\end{itemize}

\begin{rmk}
    In the case that there is a globally stable equilibrium on the boundary proof works but with an alternative Lyapunov function. However, the global dynamics in the case that   $\frac{a_{22}}{a_{12}}<\frac{\bar{r}_2}{\bar{r_1}}<\frac{a_{21}}{a_{11}}$ where $(p_1,0)$ and $(0,p_2)$ are locally stable equilibria for \eqref{eqn:MutationEvolution} is more intricate and worth investigating for future works. A natural question is how the trait-structure and mutation rates modify the the separatrix between these states. 
    
\end{rmk}
   
\end{corollary}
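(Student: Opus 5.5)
The plan is to reduce the corollary to the planar autonomous Lotka--Volterra system with constant rates $\bar r_{i,\infty}=-\lambda_i$, and then transfer its phase portrait to $(\rho_1,\rho_2)$ using the asymptotically autonomous argument behind Theorem \ref{thm:Global_attractors_BD}.

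First, from the mass relation \eqref{eqn:MassRelation} and the decoupled linear problem \eqref{eqn:Decoupled}, $\rho$ solves the perturbed system in (G):
\[
\dot\rho_i=\rho_i\Bigl(\bar r_{i,\infty}-\sum_j a_{ij}\rho_j+\alpha_i(t)\Bigr),\qquad \alpha_i(t)\xrightarrow[t\to\infty]{}0 .
\]
Here $\alpha_i\to0$ because $\tilde u_i/\tilde\rho_i\to p_i$ in $L^\infty$ by the spectral gap of the Neumann operator $d_i\Delta+r_i$. I will take $a_{ii}>0$, as in the model. Then $\dot\rho_i\le\rho_i(\bar r_{i,\infty}+\alpha_i-a_{ii}\rho_i)$ whenever $a_{ij}\ge0$ for $j\ne i$. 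This comparison gives boundedness in the competitive setting, so (G) holds.

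Case $\bar r_{i,\infty}<0$ (competitive, or $a_{ii'}\ge0$). Fix $\varepsilon$ with $\bar r_{i,\infty}+\varepsilon<0$. For $t\ge T_\varepsilon$ we have $\dot\rho_i\le\rho_i(\bar r_{i,\infty}+\varepsilon)$, since the interaction terms are nonpositive. Hence $\rho_i\to0$ exponentially. The other species then satisfies
\[
\dot\rho_{i'}=\rho_{i'}\bigl(\bar r_{i',\infty}-a_{i'i'}\rho_{i'}+\beta(t)\bigr),\qquad \beta=\alpha_{i'}-a_{i'i}\rho_i\to0 .
\]
This is a scalar logistic equation with asymptotically vanishing, integrable-in-the-limit perturbation. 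Comparison with the logistic equations with rates $\bar r_{i',\infty}\pm\varepsilon$ gives $\limsup$ and $\liminf$ of $\rho_{i'}$ within $O(\varepsilon)/a_{i'i'}$ of $\bar r_{i',\infty}/a_{i'i'}$. Positivity is needed here, and it holds because $\bar r_{i',\infty}>0$. The point is only to keep $\liminf\rho_{i'}$ away from zero for small $\varepsilon$. Letting $\varepsilon\to0$ gives the limit $\bar r_{i'}/a_{i'i'}$.

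Case $\bar r_{1,\infty},\bar r_{2,\infty}>0$ with the stated inequalities. These inequalities say exactly that the interior equilibrium $\rho^*$ given by the displayed formula has positive components. In the competitive case they give $a_{11}a_{22}>a_{12}a_{21}$. For $N=2$ this is equivalent to Lyapunov diagonal stability: take $P=\mathrm{diag}(p_1,p_2)$ with $p_1/p_2$ chosen so that the symmetric part has a positive determinant, which is possible iff $\det A>0$ and $a_{ii}>0$. So case a) of Theorem \ref{thm:Global_attractors_BD} applies directly with $\lambda_i=-\bar r_{i,\infty}<0$, giving $\rho\to\rho^*$. Alternatively, case b) applies, since $\rho^*$ is the unique nontrivial interior equilibrium.

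The main obstacle is the first case when $a_{ii'}<0$ (mutualism). There the comparison $\dot\rho_i\le\rho_i(\bar r_{i,\infty}+\varepsilon)$ fails, and boundedness (G) must itself come from $\det A>0$. I would handle this by invoking case b) (cooperative) of Theorem \ref{thm:Global_attractors_BD}, or else by using the Goh-type Lyapunov function
\[
V=\sum_i c_i\bigl(\rho_i-\rho_i^*\log\rho_i\bigr),
\]
adapted to the boundary equilibrium. Along trajectories $\dot V\le -\delta|\rho-\rho^*|^2+C|\alpha(t)|(1+|\rho|)$, which yields boundedness and then convergence by the asymptotically autonomous (Markus--Thieme) theorem. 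The $\omega$-limit set must then be an invariant set of the limit planar system that contains no interior point or other boundary equilibrium. Chain-recurrence excludes heteroclinic cycles there, which is routine in dimension two.
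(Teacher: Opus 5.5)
Your argument for the competitive case is sound. The comparison $\dot\rho_i\le\rho_i(\bar r_{i,\infty}+\varepsilon)$, followed by squeezing $\rho_{i'}$ between logistic equations with rates $\bar r_{i',\infty}\pm\varepsilon$, is a more elementary route to the first bullet than the one the paper points to, which is a Lyapunov function for the boundary equilibrium combined with Lemma~\ref{lma:AsympAut}. The coexistence bullet goes as the paper intends: $\det A>0$ together with positive diagonal gives Lyapunov diagonal stability, so case a) of Theorem~\ref{thm:Global_attractors_BD} applies. Note that $\det A>0$ is also automatic in the mixed case $a_{21}<0<a_{12}$, which the hypotheses allow, while $a_{12}<0$ is excluded by them.

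The gap is your last paragraph. When $a_{ii'}<0$ the first bullet is not a harder case but a false one, so no argument can close it. Take $a_{11}=a_{22}=1$, $a_{12}=-2$, $a_{21}=0$, $\bar r_{1,\infty}=-1$, $\bar r_{2,\infty}=1$. This is admissible, since $\lambda_1>0$ once $\int_{\Omega_1}r_1<0$ and $d_1$ is large. Then $\rho_2\to1$, and $\dot\rho_1=\rho_1(c(t)-\rho_1)$ with $c(t)=-1+2\rho_2+\alpha_1\to1$, so $\rho_1\to1\neq0$.

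This is why both of your proposed rescues fail.
Case b) of Theorem~\ref{thm:Global_attractors_BD} assumes $\lambda_i<0$ for every $i$ and a positive equilibrium. So it says nothing when $\bar r_{i,\infty}<0$, and where it does apply it gives coexistence, not extinction.
For the Goh function centred at $\rho^*=(0,\bar r_{2,\infty}/a_{22})$ one computes $\dot V=c_1\rho_1(\bar r_{1,\infty}-a_{12}\rho_2^*)-(\rho-\rho^*)^T\mathrm{diag}(c_1,c_2)A(\rho-\rho^*)+O(|\alpha|(1+|\rho|))$. Your bound $\dot V\le-\delta|\rho-\rho^*|^2+C|\alpha|(1+|\rho|)$ therefore silently requires the saturation condition $\bar r_{1,\infty}-a_{12}\bar r_{2,\infty}/a_{22}\le0$. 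In the example this quantity equals $+1$.

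Drop that paragraph, and the chain-recurrence remark with it. State the first bullet under $a_{ii'}\ge0$, which is the competitive setting the corollary and the remark after it implicitly assume. If you want more generality, add saturation of the boundary equilibrium, together with Lyapunov diagonal stability, as an explicit hypothesis and run the Goh argument under it.
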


\subsection{Interacting species with shifting optimal traits}
Now we consider the second class of models which consist of $N$-species with shifting optimal traits. The system \eqref{eqn:General} becomes
\begin{equation}\label{eqn:ShiftingEnvironment}
      \begin{cases}
          \partial_{t}u_i=d_i\Delta{}u_i+u_i\left(r_i(x_i-\tilde{c}_it)-\sum_{j=1}^{N}a_{ij}\rho_j(t)\right),&(x_i,t)\in\mathbb{R}^n\times\mathbb{R}^{+},
    \\
\rho_i(t)=\int_{\mathbb{R}^n}u_i(x_i,t),&t\in\mathbb{R}^{+},\\
    
    u_i(x_i,0)=u_{i,0}(x_i)&x_i\in\mathbb{R}^n,
      \end{cases}
  \end{equation}
where $\tilde{c}_i\in\mathbb{R}^n$, $r_i\in{}C^2(\mathbb{R}^n)$. In additional to the usual assumptions that there are constants $C_1,C_2>0$ such that $0\leq{}u_{i,0}(x_i)\leq{}e^{C_1-C_2|x|}$ for $i\in\{1,2,...,N\}$, and that  $u_{i,0}(x_i)\in{}C^{0}(\mathbb{R}^n)$ is not identically $0$. Also, we assume that there exist constants $R,d>0$ such that  $r_i(x_i)<-d$ for $|x_i|>R$. Survival of a species now depends on both competition and how well it is adapting to the changing environment. As before, our aim is to determine the global dynamics for specific choices of the interaction matrix.

To state our first result for this model, we introduce the principle eigenvalue problems 
(where $i\in\{1,...,N\}$)
\begin{equation}\label{eqn:eigenfunctions}
      \begin{cases}
        -d_i\Delta{}p_i-\tilde{c}_i\cdot{}\nabla{}p_i=p_i\left(r_i(x_i)+\lambda_i\right),&x_i\in\mathbb{R}^n,
    \\
   p_i>0,\\
   \int_\mathbb{R}p_i=1,
      \end{cases}
  \end{equation}
The existence of solutions $(p_i,\lambda_i)$ has been proved in \cite{adaptivedynamicsdivergingfitness} via a construction from a sequence of problems in bounded domains. We use the same notation for the eigenvalue problem in each section since these are self-contained sections and it is clear when we are dealing with solutions to \eqref{eqn:eigenfunctionsDomain} or solutions to \eqref{eqn:eigenfunctions}. 

 Similar to the previous section, we define the following auxiliary systems. First, by using the transformation
\begin{equation}\label{eqn:DecouplingEqn_UBD}
    \tilde{u}_i(x,t)=u_i(x+\tilde{c}_it,t)e^{\int_{0}^ t\sum_{j=1}^Na_{ij}\rho_j(s)ds+\lambda_it}.
\end{equation}
we obtain the following equations for $\tilde{u}_i$:
\begin{equation}\label{eqn:Decoupled_UBD}
\begin{cases}
\partial_{t}\tilde{u}_i=d_i\Delta{}{\tilde{u}}_i+\tilde{c}_i\cdot\nabla\tilde{u}_i+\tilde{u}_i\left(r_i(x_i)+\lambda_i\right),&(x,t)\in\mathbb{R}^n\times\mathbb{R}^{+},
    \\
    u_i(x_i,0)=u_{i,0}(x_i),&x_i\in\mathbb{R}^n.
      \end{cases}
  \end{equation}
Next, by integrating \eqref{eqn:Decoupled_UBD} with respect to $x_i$, we obtain
\begin{align}\label{eqn:MassRelation_UBD}
\tilde{\rho}_i(t)=\int_{\mathbb{R}} \tilde{u}_i(x_i,t)\,dx=\rho_i(t)e^{\int_{0}^t\sum_{j=1}^Na_{ij}\rho_j(s)ds+\lambda_it},~t\in[0,\infty),
\end{align}
and finally, the corresponding generalised Lotka-Volterra ODE system
\begin{equation}\label{eqn:GLV_UBD}
    \frac{d}{dt}\hat{\rho}_i^ {}=\hat{\rho}_{i}^{}\left(-\lambda_i-\sum_{j=1}^{N}a_{ij}\hat{\rho}_j^{}\right),~~\text{for }~i\in\{1,...,N\}.
\end{equation}
On the existence of solutions to \eqref{eqn:MassRelation_UBD}, we have the following analogous result to Lemma \ref{lma:MassEquivalent}. 
\begin{lemma}\label{lma:MassEquivalent_UBD}
Assume that any solution $\hat{\rho}$ to 
\eqref{eqn:GLV_UBD}
with non-negative initial condition exists for all time $t\in\mathbb{R}_{>0}$.

{Also, assume that}

\begin{itemize}
   \item[(G)] {The solution to}
        \begin{align*}
        \frac{d}{dt}\rho_i(t)=\rho_i(t)\left(-\lambda_i-\sum_{j=1}^Na_{ij}\rho_j(t)+\alpha_i(t)\right),
    \end{align*} 
     {is bounded for all $t\geq{0}$, where $\alpha_i(t):=-\frac{d\tilde{\rho}_i(t)}{dt}\frac{1}{\tilde{\rho}_i(t)}$, $\tilde{u}=(\tilde{u}_1,...,\tilde{u}_N)$ is the solution to \eqref{eqn:Decoupled_UBD}, and $\tilde{\rho}_i=\int_{\mathbb{R}}\tilde{u}_i(x_i,t)dx$.}
\end{itemize}

Then there is a unique $\rho=(\rho_1,...,\rho_N)$ which satisfies \eqref{eqn:MassRelation_UBD} with $\rho_i\in{}C^1(\mathbb{R}_{>0})$. Consequently, there is a unique solution $u=(u_1,...,u_N)$ to \eqref{eqn:ShiftingEnvironment} where  $u_i\in{}C^{2,1}(\Omega\times\mathbb{R}_{>0})\cap{}C^{1,0}(\bar{\Omega}\times[0,\infty))$.
\end{lemma}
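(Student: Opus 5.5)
Given the solution $\tilde u$ of the linear decoupled problem \eqref{eqn:Decoupled_UBD}, we reduce the statement to a fixed-point problem for the masses. The argument mirrors the one intended for Lemma \ref{lma:MassEquivalent}; the only new ingredients are the unbounded domain and the drift term $\tilde c_i\cdot\nabla$.

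\textbf{Step 1: the linear problem.} First we establish the properties of $\tilde u_i$. Problem \eqref{eqn:Decoupled_UBD} is a linear parabolic equation with constant drift and a potential $r_i+\lambda_i$ that is bounded above. The initial datum satisfies $0\le u_{i,0}\le e^{C_1-C_2|x|}$. Standard theory for the Cauchy problem then gives a unique classical solution in the class of functions of controlled growth, and the strong maximum principle gives $\tilde u_i>0$ for $t>0$.

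We then need $\tilde\rho_i(t)=\int_{\mathbb{R}^n}\tilde u_i\,dx$ to be finite, $C^1$ on $\mathbb{R}_{>0}$, continuous at $0$, and positive. For this we build a supersolution of the form $e^{Kt}e^{C_1-\mu|x|}$, or a smoothed version $e^{Kt}e^{C_1}\cosh(\mu|x|)^{-1}$, with $\mu\le C_2$ and $K$ large. Here $K$ must dominate $d_i\mu^2+|\tilde c_i|\mu+\sup r_i+\lambda_i$. Comparison then yields exponential spatial decay of $\tilde u_i$, locally uniformly in time. Parabolic estimates give the analogous bounds for $\partial_t\tilde u_i$, so we may differentiate under the integral. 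Integrating the equation in $x$, the diffusion and drift terms vanish because of the decay. This gives
\[
\tilde\rho_i'(t)=\int_{\mathbb{R}^n}(r_i+\lambda_i)\tilde u_i\,dx,
\]
and so $\alpha_i=-\tilde\rho_i'/\tilde\rho_i$ is continuous and locally bounded. We expect this decay-and-integrability step to be the main technical obstacle. The key point is to check that the supersolution absorbs the drift; it does, since $|\tilde c_i\cdot\nabla e^{-\mu|x|}|\le|\tilde c_i|\mu e^{-\mu|x|}$.

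\textbf{Step 2: reduction to the ODE.} Next we show that \eqref{eqn:MassRelation_UBD} is equivalent to the ODE in (G) with $\rho_i(0)=\tilde\rho_i(0)=\int u_{i,0}$. Write \eqref{eqn:MassRelation_UBD} as $\log\rho_i=\log\tilde\rho_i-\int_0^t\sum_j a_{ij}\rho_j-\lambda_i t$. If $\rho$ is continuous and positive, the right-hand side is $C^1$, and differentiating gives the ODE in (G). Conversely, integrating the ODE recovers \eqref{eqn:MassRelation_UBD}.

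The ODE has a locally Lipschitz right-hand side, since $\alpha_i$ is continuous in $t$ and the nonlinearity is polynomial in $\rho$. Picard–Lindelöf therefore gives a unique local $C^1$ solution. Positivity is preserved because each equation has the form $\rho_i'=\rho_i\cdot(\cdots)$. Assumption (G) gives boundedness, which rules out blow-up, so the solution extends to all $t\ge0$. Uniqueness for \eqref{eqn:MassRelation_UBD} follows from uniqueness for the ODE via the equivalence above.

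\textbf{Step 3: reconstruction of $u$.} Finally we define
\[
u_i(x,t)=\tilde u_i(x-\tilde c_i t,t)\,e^{-\int_0^t\sum_j a_{ij}\rho_j\,ds-\lambda_i t}.
\]
Reversing the formal computation that led to \eqref{eqn:Decoupled_UBD}, and using the chain rule in the moving frame, which produces exactly the drift, shows that $u$ is a classical solution of \eqref{eqn:ShiftingEnvironment}. Its mass equals $\rho_i$ by \eqref{eqn:MassRelation_UBD} and translation invariance of the integral.

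For uniqueness, suppose $u$ is any classical solution with integrable, decaying components. Then its masses satisfy \eqref{eqn:MassRelation_UBD} with the same $\tilde u$, because the linear problem has a unique solution. By Step 2 the masses coincide, and hence so does $u$. The regularity $u_i\in C^{2,1}\cap C^{1,0}$ is inherited from that of $\tilde u_i$ and $\rho_i\in C^1$.
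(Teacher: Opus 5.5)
Your proposal is correct and follows the paper's route. You rewrite \eqref{eqn:MassRelation_UBD} as the perturbed Lotka--Volterra ODE, get a unique local solution from Picard--Lindel\"{o}f, extend it globally with (G), and rebuild $u$ by undoing the moving-frame transformation \eqref{eqn:DecouplingEqn_UBD}. The only difference is that you obtain the needed properties of $\tilde\rho_i$ (positivity, finiteness, $C^1$ regularity, hence continuity of $\alpha_i$) from an elementary supersolution and the strong maximum principle, rather than citing Lemma \ref{lma:AlphaExpDecay_UBD}. That is legitimate, because this lemma needs only continuity of $\alpha_i$, not its exponential decay.

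One small point, inherited from the paper rather than a flaw in your argument: differentiating $\log\rho_i=\log\tilde\rho_i-\int_0^t\sum_j a_{ij}\rho_j\,ds-\lambda_i t$ gives the ODE with $\alpha_i=+\tilde\rho_i'/\tilde\rho_i$. The minus sign in the definition of $\alpha_i$ in (G), which your Step 2 carries over, is therefore a typo in the statement, and it does not affect the argument.
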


The second main result of the present paper is the following result, which characterises the long-time behaviour of the system \eqref{eqn:ShiftingEnvironment}. 
\begin{theorem}\label{thm:Global_attractors_UBD}
Let $u=(u_1,...,u_N)$ be the solution to \eqref{eqn:ShiftingEnvironment} and $\rho_i(x)=\int_{\mathbb{R}^n}u_i(x,t)dx$ for $i\in\{1,...,N\}$.

If $\rho^{*}$ is the unique equilibrium solution to \eqref{eqn:GLV_1} with positive entries, and one of
\begin{itemize}
    \item[a)]  $A$ is Lyapunov diagonally stable,
    \item[b)]  $a_{ij}>0$ for all $i,j\in\{1,...,N\}$, or $a_{ij}\leq{0}$ for $i\neq{j}\in\{1,...,N\}$
\end{itemize}
then $\rho(t)\xrightarrow[t\rightarrow\infty]{}\rho^{*}$ and $\Vert{}\frac{u_i(\cdot,t)}{\rho_i(t)}-p_i\Vert_{L^\infty(\mathbb{R}^n)}\xrightarrow[t\rightarrow\infty]{}0$.

In case c), where $a_{ij}=a_i>0$,  for each $i,j\in\{1,...,N\}$, we let $\underline{i}\in\{1,...,N\}$ be the unique index such that $\frac{\lambda_{\underline{i}}}{a_{\underline{i}}}=\min_{i\in\{1,...,N\}}\frac{\lambda_i}{a_i}<0$. Then
   \begin{align*}
       &\left\Vert{u_j}\right\Vert_{L^\infty(\mathbb{R}^n)}\xrightarrow[t\rightarrow\infty]{}0,~~\text{for }i\neq{}\underline{i}\\
       &\rho_{\underline{i}}(t)\xrightarrow[t\rightarrow\infty]{}-\frac{\lambda_{\underline{i}}}{a_i},~~ \left\Vert{}\frac{u_{\underline{i}}(\cdot,t)}{\rho_{\underline{i}}(t)}-p_i\right\Vert_{L^\infty(\mathbb{R}^n)}\xrightarrow[t\rightarrow\infty]{}0.
   \end{align*}

\end{theorem}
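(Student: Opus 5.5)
We mirror the strategy for Theorem \ref{thm:Global_attractors_BD}, working in the moving frame. Set $\tilde u_i$ as in \eqref{eqn:DecouplingEqn_UBD}, which solves the linear decoupled equation \eqref{eqn:Decoupled_UBD}. The first step is to show that $\tilde u_i(\cdot,t)/\tilde\rho_i(t)\to p_i$ in $L^\infty(\mathbb{R}^n)$ and that
\[
\alpha_i(t)=-\frac{\dot{\tilde\rho}_i}{\tilde\rho_i}\to 0 .
\]
Here $p_i$ is the principal eigenfunction of \eqref{eqn:eigenfunctions}.

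For this I would use the confining assumption $r_i<-d$ for $|x|>R$, together with the exponential decay of the initial data. From these I would build exponential super-solutions of the form $Ce^{-\mu|x|}$ outside a ball, which make the operator $d_i\Delta+\tilde c_i\cdot\nabla+r_i+\lambda_i$ behave as if it had compact resolvent. Equivalently, I would conjugate by $e^{\tilde c_i\cdot x/(2d_i)}$ to obtain a self-adjoint Schrödinger operator whose potential is $r_i-|\tilde c_i|^2/(4d_i)$, which is negative at infinity. This operator has a spectral gap below $-\lambda_i$.

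Relative entropy or spectral-gap arguments then give $\tilde u_i(x,t)\to c_ip_i(x)$ for some $c_i>0$, uniformly in $x$ after normalisation. I would also need convergence of the mass $\tilde\rho_i(t)\to c_i$, which requires uniform tail control via the exponential barriers. For $\alpha_i\to0$, note that
\[
\dot{\tilde\rho}_i=\int(r_i+\lambda_i)\tilde u_i\,dx,
\]
which converges to $c_i\int(r_i+\lambda_i)p_i\,dx=0$. This integral vanishes by integrating \eqref{eqn:eigenfunctions} over $\mathbb{R}^n$, using the decay of $p_i$. This convergence on the unbounded domain, including the integrability of $r_iu_i$ and uniform tails, is where I expect the main obstacle. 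I would adapt the bounded-domain approximations of \cite{adaptivedynamicsdivergingfitness}.

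Next, with $\alpha_i\to0$, the mass $\rho$ satisfies the asymptotically autonomous system in (G). The task is to verify (G) in each case.

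In case a), take a positive diagonal $P$ and consider
\[
V(\rho)=\sum_i P_i\bigl(\rho_i-\rho_i^*-\rho_i^*\log(\rho_i/\rho_i^*)\bigr).
\]
Along solutions,
\[
\dot V=-(\rho-\rho^*)^TPA(\rho-\rho^*)+\sum_i P_i(\rho_i-\rho_i^*)\alpha_i\le -c|\rho-\rho^*|^2+C|\alpha(t)||\rho-\rho^*|,
\]
so $V$ stays bounded and eventually decreases to $0$. This gives boundedness, and in fact convergence to $\rho^*$ directly.

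In case b), competitive, boundedness follows from $\dot\rho_i\le\rho_i(-\lambda_i+\sup|\alpha_i|-a_{ii}\rho_i)$. In the cooperative subcase I would use the Lyapunov diagonal stability that follows from existence of a positive equilibrium for an M-matrix-type $A$, and then reduce to case a).

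With boundedness established, Lemma \ref{lma:MassEquivalent_UBD} gives global existence. The general theorem on asymptotically autonomous ODEs (Thieme/Markus), applied to the limit system \eqref{eqn:GLV_UBD}, shows the $\omega$-limit set is chain recurrent for \eqref{eqn:GLV_UBD}. To conclude $\rho\to\rho^*$, I also need persistence: $\liminf\rho_i>0$. I would get this from Lemma \ref{lma:MassBoundSolution}, since $\tilde\rho_i\to c_i>0$ forces
\[
\rho_i(t)e^{\int_0^t\sum_j a_{ij}\rho_j\,ds+\lambda_it}\to c_i,
\]
which excludes boundary equilibria that are unstable in the $i$th direction.

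In case c), writing $S=\sum_j\rho_j$, we have $\rho_i=\tilde\rho_ie^{-\lambda_it-a_i\int_0^tS}$. Hence
\[
\frac{\rho_i^{1/a_i}}{\rho_{\underline i}^{1/a_{\underline i}}}\sim \exp\bigl(-t(\lambda_i/a_i-\lambda_{\underline i}/a_{\underline i})\bigr)\to0.
\]
Combined with boundedness this gives $\rho_i\to0$ for $i\neq\underline i$. Then $\rho_{\underline i}$ solves a scalar asymptotically autonomous logistic equation and tends to $-\lambda_{\underline i}/a_{\underline i}$.

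Finally, the profile statement follows because $u_i(\cdot+\tilde c_it,t)/\rho_i=\tilde u_i/\tilde\rho_i\to p_i$. The $L^\infty$ decay of $u_j$ follows from $\rho_j\to0$ together with the uniform bound on $\tilde u_j/\tilde\rho_j$.
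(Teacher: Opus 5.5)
The step that fails is the last one in the purely competitive subcase of b). The trapping region gives boundedness, and the Markus--Thieme theorem then tells you only that the $\omega$-limit set is a compact, invariant, internally chain-recurrent set of \eqref{eqn:GLV_UBD}. To identify it with $\{\rho^*\}$ you need two further facts: $\rho^*$ must attract every orbit of the autonomous system in the open orthant, and every boundary invariant set must be excluded. Neither follows from $a_{ij}>0$ together with uniqueness of the positive equilibrium. Your persistence argument only rules out convergence to a single boundary equilibrium with a positive invasion rate.

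In fact the conclusion is false in this subcase. Take two identical species ($r_1=r_2$, $d_1=d_2$, $\tilde c_1=\tilde c_2$) with $\lambda_1=\lambda_2=\lambda<0$ and $A=\begin{pmatrix}1&2\\2&1\end{pmatrix}$. Then $\rho^*=(-\lambda/3,-\lambda/3)$ is the unique positive equilibrium, but it is a saddle, while $(-\lambda,0)$ and $(0,-\lambda)$ are stable. With $u_{i,0}=m_ip_i$ one has $\tilde u_i\equiv m_ip_i$, so $\alpha\equiv0$ and $\rho$ solves \eqref{eqn:GLV_UBD} exactly. For $m_1>m_2>0$ it converges to $(-\lambda,0)$. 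For $N=3$, May--Leonard matrices with all $a_{ij}>0$ give orbits approaching an attracting heteroclinic cycle on the boundary, which your argument does not exclude either.

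The paper's proof has the same hole: it applies Lemma \ref{lma:AsympAut} in case b) without ever checking that $\rho^*$ is a global attractor of the limit system. This subcase therefore needs an extra hypothesis. One option is Lyapunov diagonal stability, which returns you to case a). Another is global stability of $\rho^*$ in the open orthant, combined with a genuine persistence argument, for instance average Lyapunov functions applied to $\log\rho_i=\log\tilde\rho_i-\lambda_it-\int_0^t(A\rho)_i\,ds$ with $\log\tilde\rho_i$ bounded.

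Apart from this, your outline follows the paper's route. The linear step rests on the same exponential-separation results the paper cites; only $\alpha_i\to0$, not exponential decay, is actually used afterwards. Condition (G) comes from the same trapping region and Lyapunov function. The cooperative subcase is reduced to a) through the M-matrix property. That property is guaranteed when $A\rho^*=-\Lambda>0$, i.e.\ all $\lambda_i<0$; Theorem \ref{thm:Global_attractors_BD} assumes this but the present statement does not, and without it $A$ need not be an M-matrix.

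Two of your deviations improve on the paper. First, the ratio identity in case c) gives exponential decay of $\rho_i$ for $i\neq\underline{i}$ directly from \eqref{eqn:MassRelation_UBD}. The paper instead goes through Lemma \ref{lma:AsympAut}, whose global-attractor hypothesis fails on the invariant faces of the orthant, or through the limsup dichotomy of Lemma \ref{lma:MassBoundSolution}, which needs additional work to single out $\underline{i}$ as the survivor. Second, your profile statement $u_i(\cdot+\tilde c_it,t)/\rho_i(t)\to p_i$ is the correct one. Since $u_i(\cdot,t)/\rho_i(t)$ is close to $p_i(\cdot-\tilde c_it)$, the fixed-frame convergence written in the statement cannot hold when $\tilde c_i\neq0$.
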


The biological interpretation is largely the same as the first model (see the discussion below Remark \ref{rem: assumptions}) except in the case c) where the eigenvalue might no longer be a monotonically increasing function of $d_i$. {For simplicity, we now consider the one dimensional case ($n=1$) where $\tilde{c}_i$ is a positive constant.} It is easily established \cite{iglesias2021selection,adaptivedynamicsdivergingfitness} that
\begin{equation}\label{eqn:RayleighUBD}
    \lambda_i(d_i,\tilde{c}_i)=\inf_{\phi\in{}H_0^1(\mathbb{R})/\{0\}}\frac{d_i\int_{\mathbb{R}}|\partial_{x}\phi(x)|^2dx-\int_{\mathbb{R}}r_i(x)\phi(x)^2dx}{\int_{\mathbb{R}}\phi(x)^2dx}+\frac{\tilde{c}_i^2}{4d_i},
\end{equation}
which can be written as
\[\lambda_i(d_i,\tilde{c}_i)=\lambda_i(d_i,0)+\frac{\tilde{c}_i^2}{4d_i}.\]
If $c_i\neq{0}$ then $\tilde{c}_i$, $\lambda_{i}(d_i,\tilde{c}_i)$ is minimal for an intermediate value of $d_i$. This means that in a changing environment, an intermediate mutation rate could be selected for provided that the environment changes sufficiently fast.

\subsection{Comparison to results for related models}
It is now instructive to compare the results of this paper with existing results for related models: a trait-structured generalised Lotka-Volterra system (without mutation) \cite{pouchol2018global}; models for the evolution of passive dispersal in static and dynamic environments \cite{dockery1998evolution}; and the single species version of \eqref{eqn:ShiftingEnvironment} but where the fitness function has multiple locally optimal traits that shift in different directions \cite{adaptivedynamicsdivergingfitness}.

In \cite{pouchol2018global}, the authors consider systems of the form
\begin{align}\label{eqn:NoMutation}
  \begin{cases}
      \partial_tn_i=n_i\left(r_i(x)+m_i(x)\sum_{j=1}^Na_{ij}\rho_j\right),&(x,t)\in\Omega_i\times{}\mathbb{R}_{>0},\\
    \rho_j(t)=\int_{\Omega_j}n_j(x,t)dx,&t\in\mathbb{R}_{>0}\\
  \end{cases}  
\end{align}
for $i=1,...,N,$ where $\Omega_i$ is a compact subset of $\mathbb{R}^n$. This model is identical to model \eqref{eqn:MutationEvolution} except model \eqref{eqn:MutationEvolution} includes simple diffusion (representing mutation between traits) and sets $m_i\equiv{1}$ so that interactions are uniform between traits. Both of these differences substantially alter the analytical approach required for the two problems: firstly, due to the lack of diffusion, the long-time behaviour of \eqref{eqn:NoMutation} depends strongly on the initial condition since the support of the solution is invariant. Secondly, the $x$-dependence in the interaction terms means there is no transform like \eqref{eqn:DecouplingEqn} that produces a decoupled system. Consequently, the approach based on generalised principal eigenvalue-eigenfunction pairs is no longer appropriate. Nevertheless, many conclusions hold for both models:
by making use of a Lyapunov functional (and assuming Lyapunov diagonal stability, e.g., case (a) in the present work), it is shown (\cite[Theorem 2.1]{pouchol2018global}) that the population  mass vector $\rho$ obtained from \eqref{eqn:NoMutation} converges to the equilibrium solution to a generalised Lotka-Volterra equation. Specifically, it holds that $\rho\xrightarrow[t\rightarrow\infty]{}\rho^\infty\in\mathbb{R}^N$ where $\rho^\infty$ solves $A\rho{}^\infty+I^\infty=0$ (where the $i$th component of $i$ is given by $I^\infty_i=\max_{i\in\Omega_i}\frac{r_i(x)}{m_i(x)}$).  Note that the principal eigenvalues for \eqref{eqn:eigenfunctions} satisfy $\lambda_i(d_i)\xrightarrow[d_i\rightarrow{0+}]{}-\max_{i\in\Omega_i}r_i(x)$.

Next, we remark that the conclusion of the previous section can be considered analogous to results obtained in studying the evolution of passive dispersal which was initiated in \cite{hastings1983can}. This was extended to the Lotka-Volterra framework in \cite{dockery1998evolution} where the following system was studied
\begin{align}\label{eqn:DispersalEvolution}
  \begin{cases}
      \partial_tn_i=d_i\Delta{}n_i+n_i\left(r(x)-\sum_{j=1}^N n_j\right),&(x,t)\in\Omega\times{}\mathbb{R}_{>0},\\
      \partial_{\nu}n_i=0,&x\in\partial\Omega.
  \end{cases}  
\end{align}
The above equation models  $N$ species which are identical except for their (now spatial) passive diffusion rates, competing in a spatial domain  $\Omega$ whose heterogenous resource distribution is represented by $r=r(x)$. Here the diffusion coefficients are ordered $d_1<d_2<...<d_N$. This has an identical form to \eqref{eqn:MutationEvolution} with uniform inter-population competitive interactions, except: the competition is now entirely local in $x$, and each population has the same reproduction rate $r$. The competitive exclusion by the slowest diffusers (e.g., the survival of only the slowest diffusing population) was shown in \cite{dockery1998evolution} for $N=2$. The question of whether the slowest diffuser competitively excludes all other populations for $N\geq{}3$ remains open, but see \cite{cantrell2021evolution} for progress towards a resolution. It should be noted that Theorem \ref{thm:Global_attractors_BD} (in case (c), where we set $r_i\equiv{}r$, $\Omega_i\equiv\Omega$, $a_i\equiv1$, for all $i=1,...,N$), together with the variational formula for the eigenvalue \eqref{eqn:Rayleigh}, implies that the weakest mutator (i.e., the population with smallest $d_i$) competitively excludes the other populations for any $N$.

The situation is made more complicated when the environment is inhomogeneous in time as well as space, e.g., where $r\equiv{}r(x,t)$ in \eqref{eqn:DispersalEvolution}. For two identical populations except for their diffusion coefficients and for $r(x,t)$ periodic in $t$, it was shown  that there are choices of $r$ such that any of the following outcomes is possible: both populations coexist; the slowest diffuser competitively excludes the fastest; the fastest diffuser competitively excludes the slowest \cite{hutson2001evolution}. The conditions under which each outcome occurs (given that either diffusion coefficient is sufficiently large or small) are presented in \cite{bai2022dynamics}. For model \eqref{eqn:ShiftingEnvironment}, part (c) of Theorem \ref{thm:Global_attractors_UBD}, together with the variational formula for the eigenvalue \eqref{eqn:RayleighUBD}, imply that an intermediate diffusion rate is always optimal.

To conclude, we discuss the possibility of analysing systems of the form \eqref{eqn:ShiftingEnvironment} when the reproduction rate $r$ depends on time and trait in more complex ways. In \cite{adaptivedynamicsdivergingfitness} the following equation was considered
\begin{equation}
    \begin{cases}
        \partial_tn-d\,\partial_{xx}n=n(r-\rho),&(x,t)\in\mathbb{R}\times\mathbb{R}_{>0},\\
        \rho(t)=\int_{\mathbb{R}}n(x,t)dx,&t\in\mathbb{R}_{>0},
    \end{cases}
\end{equation}
where $r=r_1(x-c_1t)+r_2(x-c_2t)-d$ and $r_1,r_2\in{}C^2_c(-R,R)$ for some $R>0$. Although this equation is intrinsically non-autonomous (there is no coordinate transform that exchanges the time-dependence of the reaction for a drift term), sub- and super-solution methods may be used to show that the solution will decay on the support of either $r_1(x-c_1t)$ or $r_2(x-c_2t)$ and thus determine the long-time behaviour. There are no major difficulties in considering reproduction rates of this form for each species in \eqref{eqn:ShiftingEnvironment} using the same approach since  comparison theorems are available for the linearised and decoupled equations.

\subsection{Numerics}

In this section, we provide numerics to illustrate Theorem \ref{thm:Global_attractors_BD} and Theorem \ref{thm:Global_attractors_UBD}. For all results in this section we let: $N=3$, $d_1=6/5$, $d_2=1$, and $d_3=4/5$; $r_1(x)=2\left(1-\left(x+1/10\right)^2\right)$, $r_2(x)=2\left(1-(3/2)|x|^3\right)$, and $r_3(x)=3\left(1-\frac{3}{2}\left(x-(1/10)\right)^2\right)$ for $x\in\Omega$.  We take $A=A_j$ for $j\in\{1,2,3,4\}$ where
\begin{enumerate}
    \item[(a)] $A_1=\begin{pmatrix}
        1 & 0 & 1\\
        1 &2&-\frac{3}{2}\\
        -2 & \frac{1}{2}&3
    \end{pmatrix}$ for the Lyapunov diagonally stable case,
    \item[(b)] $A_2=\begin{pmatrix}
        1 & 0 & -1\\
        0 &2&-1\\
        -\frac{1}{10} & -\frac{1}{2}&3
    \end{pmatrix}$ for the pure cooperation case, $A_3=\begin{pmatrix}
        1 & 0 & 1\\
        0 &2&1\\
        \frac{1}{10} & \frac{1}{2}&3
    \end{pmatrix}$
    for the pure competition case, or
    \item[(c)] $A_4=\begin{pmatrix}
        \frac{3}{5} & \frac{3}{5} & \frac{3}{5}\\
        1 &1&1\\
        \frac{4}{5} & \frac{4}{5} & \frac{4}{5}
    \end{pmatrix}$ for the uniform competition case.
\end{enumerate}
We consider the problem in a bounded domain $\Omega=[-1,1]$ for numerical results related to Theorem \ref{thm:Global_attractors_BD}) and the unbounded domain $\Omega=\mathbb{R}$ (for numerical results related to Theorem \ref{thm:Global_attractors_UBD}).

For both domains $\Omega=[-1,1]$ and $\Omega=\mathbb{R}$, we solve {\eqref{eqn:MutationEvolution} and \eqref{eqn:ShiftingEnvironment} on an interval of the form $[-L,L]$. We discretize the $x$-domain uniformly with spacing $dx=\frac{1}{200}$, and discretize the time-domain $[0,T]$ with uniform spacing $dt=\frac{dx^2}{20}$ (where we specify $T$ in each section below). For $i\in{}\{1,...,N_t/dt\}$ and $j\in\{0,...,2L/dx\}$ $x_j=-L+jdx{}$ and $t_k=kdt$ (where $i=1,...,N_x$ and $k=1,...,N_t$)  the solution $u^{k}_j=(u_{1,j}^k,u_{2,j}^k,u_{3,j}^k)$ at point $(x_j,t_k)$  is obtained using a simple finite difference scheme analogous to the one employed in \cite{adaptivedynamicsdivergingfitness}.  The coupling between components occurs only through the non-local terms $\rho^k=(\rho_1^k,\rho_2^k,\rho_3^k)$ which are obtained by integration at each time-step. As initial conditions we take $u(x)=\frac{3}{5}e^{-x^2}(1,1,1)$ for $x\in\Omega$.

In order to compute the eigenfunction-eigenvalue pairs $(p_i,\lambda_i)$ (solving either \eqref{eqn:eigenfunctionsDomain} or \eqref{eqn:eigenfunctions}) we  employ the Strum-Lioville toolbox Matslise 2.0 \cite{ledoux2016matslise}. Using the numerically obtained eigenvalues, we then compute the solution  $\tilde{\rho}=(\tilde{\rho}_1,\tilde{\rho}_2,\tilde{\rho}_3)$ to the Generalised Lotka-Volterra ODE \eqref{eqn:GLV_1}  using \texttt{MATLAB}'s ode45 solver.

We first present the results related to the problem on the bounded domain with Neumann boundary conditions \eqref{eqn:MutationEvolution} whose long time asymptotics are given in Theorem \ref{thm:Global_attractors_BD}.  \begin{figure}[ht]
\centering
    \subfigure[]{\includegraphics[scale=.45]{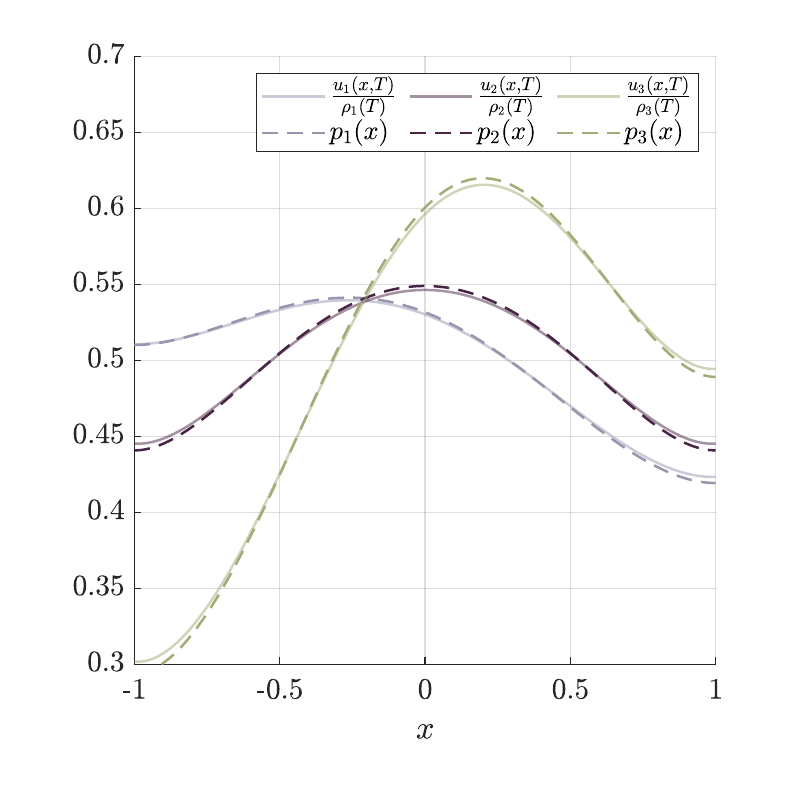}}
    \subfigure[]{\includegraphics[scale=.325]{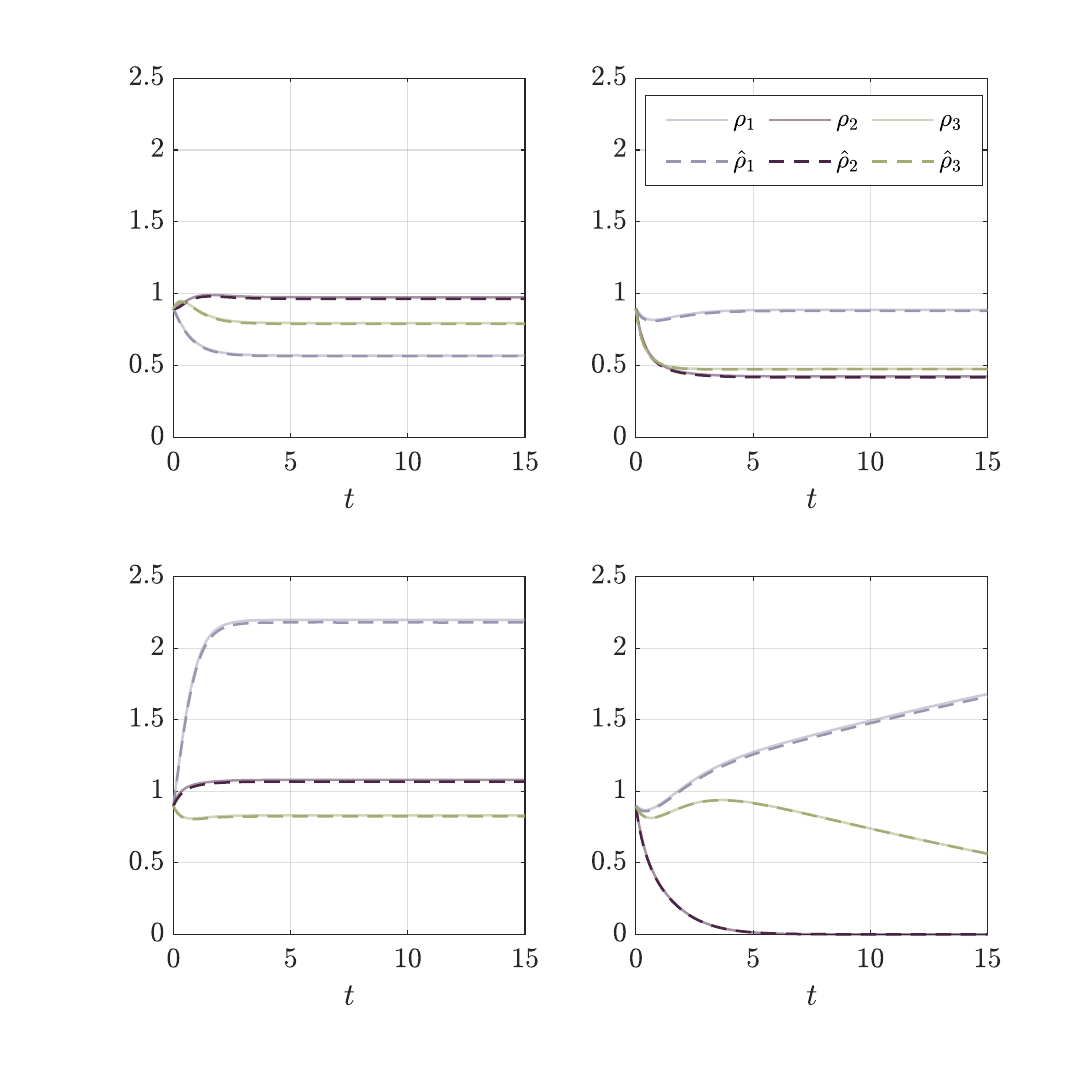}}
    \caption{Plots of trait distributions and the evolution of mass vectors obtained from solving \eqref{eqn:MutationEvolution}. \textbf{(a)}  The trait distributions $u_i(x,T)/p_i(T)$ (for $i=1,...,3$) (solid lines) and the eigenfunction associated to the linearised PDE (dotted) for $A=A_1$ obtained from. \textbf{(b)}  For $A=A_1,A_2,A_3$ or $A_4$, in order top left, top right, bottom left, bottom right: Evolution of population mass vector $\rho=(\rho_1,\rho_2,\rho_3)$ (solid lines) and the solution $\hat{\rho}=(\hat{\rho}_1,\hat{\rho}_2,\hat{\rho}_3)$  to \eqref{eqn:GLV_1} (dashed lines) up to $T=15$.}
\label{fig:Theorem21}
\end{figure}Figure (\ref{fig:Theorem21}a) shows the trait distributions $\frac{u_j(x,T)}{\rho_j(T)}$ (for $j=1,2,3$) at the final time $T=15$ for $A=A_1$ which are in close agreement with the eigenfunctions $p_j$ which solve \eqref{eqn:eigenfunctionsDomain} in accordance with Theorem \ref{thm:Global_attractors_BD}. We only consider the results for $A=A_1$ since the distributions are identical in the other cases. In Figure (\ref{fig:Theorem21}b) we compare the evolution of the solution to \eqref{eqn:GLV_1} and the population mass vector $\rho$ obtained from \eqref{eqn:MutationEvolution}. The long time behaviour of both mass vectors are the same, although for the chosen parameters and initial conditions they actually remain close for all times. For the uniform competition case (bottom right panel of Figure (\ref{fig:Theorem21}b)),  both $\rho_2$ and $\rho_3$ vanish as $t\rightarrow\infty$ which is accordance with the final statement in Theorem \ref{thm:Global_attractors_BD} since (using the numerical computed eigenvalues) we have $\frac{\lambda_1}{a_1}=-2.2590...<\frac{\lambda_3}{a_3}=-2.1518...<\frac{\lambda_2}{a_2}=-1.3100...$ so that $1=\text{argmin}_{i\in\{1,2,3\}}\frac{\lambda_i}{a_i}$.

To illustrate Theorem 2.2 numerically we approximate  \eqref{eqn:ShiftingEnvironment} on the infinite domain $\Omega=\mathbb{R}$ by the Dirichlet problem on the finite domain $[-10,10]$ (after shifting coordinates to the moving frame $x_i\xrightarrow[]{}x_i-\tilde{c}_i{}t$).  We set $\tilde{c}_1=1/10$, $\tilde{c}_2=1/20$, and $\tilde{c}_3=1/5$.  Figure (\ref{fig:Theorem22}a) demonstrates the agreement between the trait distributions $u_i(x,T)/p_i(T)$ obtained from numerical solution to the Dirichlet problem that approximates \eqref{eqn:ShiftingEnvironment} and the eigenfunctions $p_i$ that solve \eqref{eqn:eigenfunctions} (for $i=1,2,3$). Figure (\ref{fig:Theorem22}b) shows discrepancies at early times between the mass vector $\rho=(\rho_1,\rho_2,\rho_3)$ and the solution to the GLV \eqref{eqn:GLV_1} (these are most pronounced in the bottom two panels) but the long time asymptotics remain identical as per Theorem \ref{thm:Global_attractors_UBD}. Similarly, the population which minimises $\frac{\lambda_i}{a_i}$ over $i\in\{1,2,3\}$ survives in the uniform competition case (Figure \ref{fig:Theorem22}b, bottom right panel), corresponding to $A=A_4$.
\begin{figure}[h]
\centering
    \subfigure[]{\includegraphics[scale=.45]{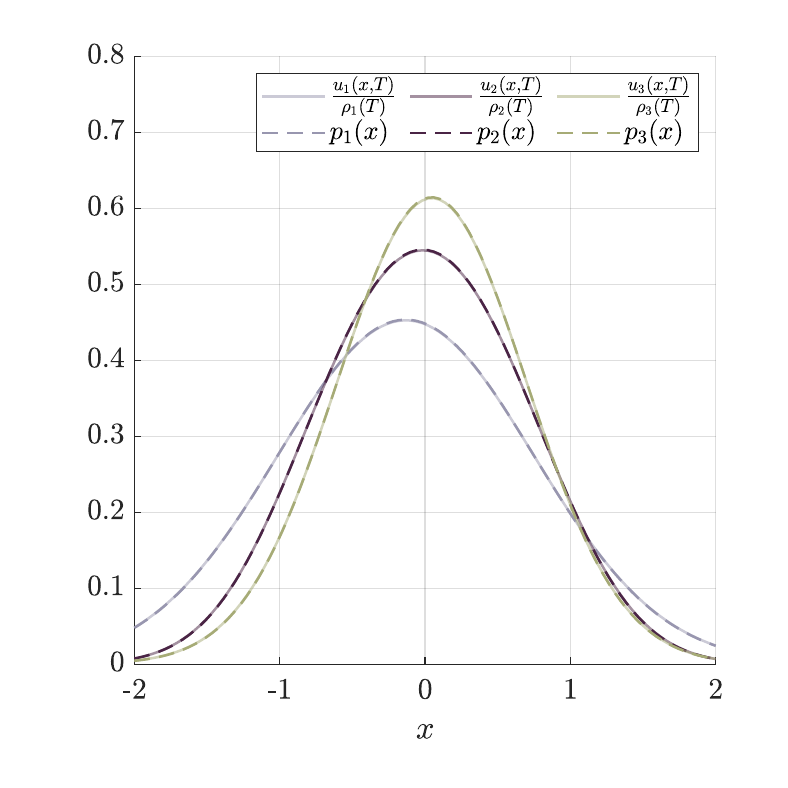}}
    \subfigure[]{\includegraphics[scale=.325]{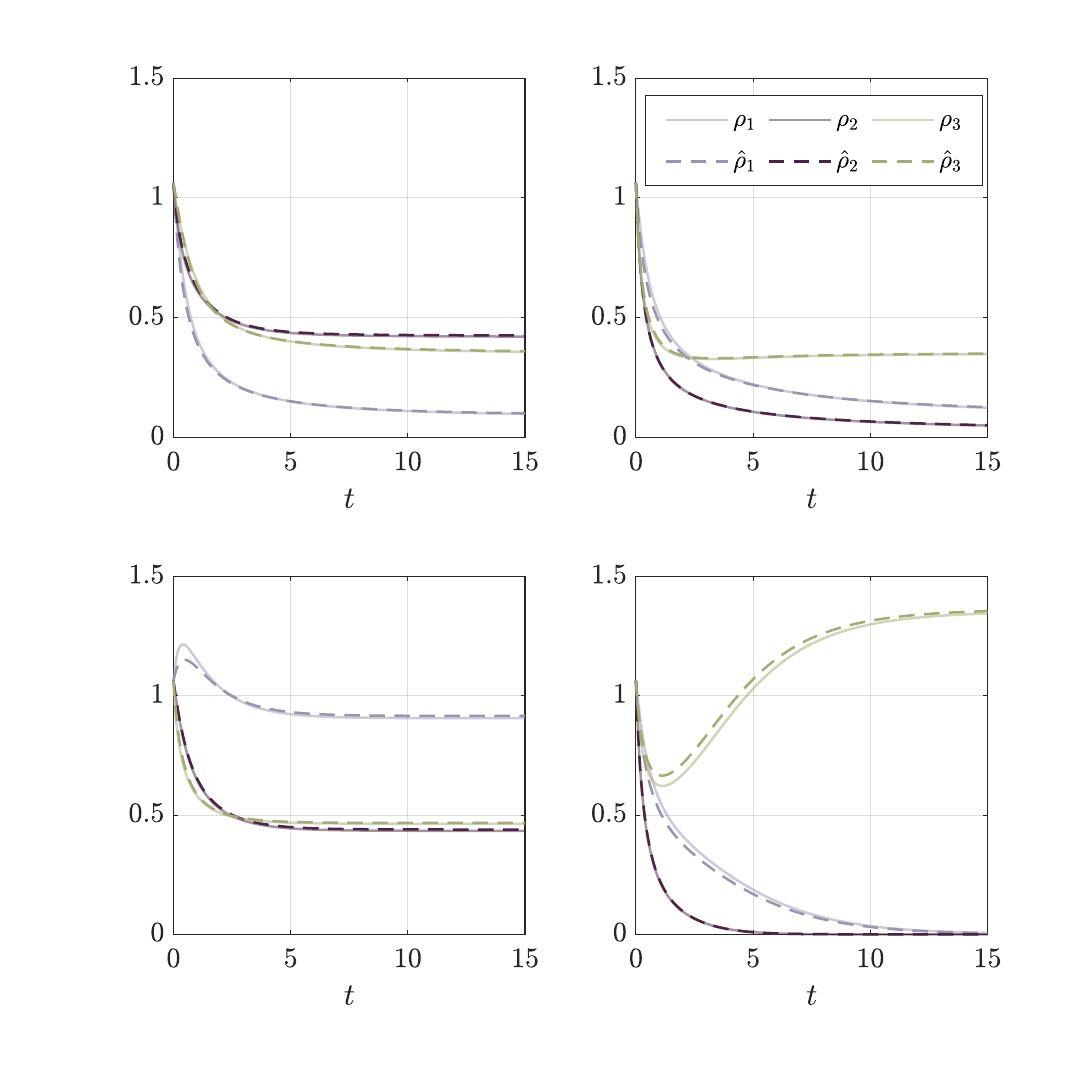}}
    \caption{Plots of trait distributions and the evolution of mass vectors obtained from solving \eqref{eqn:ShiftingEnvironment}. \textbf{(a)}  The trait distributions $u_i(x,T)/p_i(T)$ (for $i=1,...,3$) (solid lines) and the eigenfunction associated to the linearised PDE (dotted lines) for $A=A_1$ obtained from. \textbf{(b)}  For $A=A_1,A_2,A_3$ or $A_4$, in order top left, top right, bottom left, bottom right: Evolution of population mass vector $\rho=(\rho_1,\rho_2,\rho_3)$ (solid lines) and the solution $\hat{\rho}=(\hat{\rho}_1,\hat{\rho}_2,\hat{\rho}_3)$  to \eqref{eqn:GLV_1} (dashed lines) up to $T=15$.}
\label{fig:Theorem22}
\end{figure}

\section{Proofs of Theorems \ref{thm:Global_attractors_BD} and \ref{thm:Global_attractors_UBD}}\label{sec:Proofs}
In this section we provide the detailed proof of Theorem \ref{thm:Global_attractors_BD}, and since it follows very closely, outline the proof of Theorem \ref{thm:Global_attractors_UBD}. To prove Theorem \ref{thm:Global_attractors_BD}, we begin by establishing the long time behaviour of the solution $\tilde{u}$ to the linear, decoupled system \eqref{eqn:Decoupled} (Lemmas \ref{lma:TransformBounds}, \ref{lma:AlphaExpDecay}). The control on $\tilde{u}$ is then used to show the existence and uniqueness of a solution $u$ to \eqref{eqn:MutationEvolution} (Lemmas \ref{lma:MassEquivalent} and \ref{lma:BoundedTrajetories}}). Finally, we relate this to the long-time behaviour of $u$ (Lemma \ref{lma:MassBoundSolution}), and  use some Lyapunov functions, or the boundedness of trajectories, to prove Theorem \ref{thm:Global_attractors_BD}.

\subsection{Proof of Theorem \ref{thm:Global_attractors_BD}}
\label{sec: proof21}
We first state a lemma, which is a consequence of the comparison principle for parabolic equations (e.g., we use multiples of the principle eigenvalue as sub- and super-solutions).
\begin{lemma}\label{lma:TransformBounds}
    Given  $\tilde{u}(x,t)=(\tilde{u}_1(x,t),...\tilde{u}_2(x,t))$ solves \eqref{eqn:Decoupled}, and $t_0>0$, there exist constants $\underline{w}^ i\leq{}\overline{w}^i$ ($i=1,...,N)$ such that $\underline{w}_ip_i(x)\leq{}\tilde{u}_{i}(x,t)\leq{}\overline{w}_ip_i(x)$ for $(x,t)\in\Omega_i\times(t_0,\infty)$. If $u_{i,0}$ is not identically $0$, then $\underline{w}_i>0$.
\end{lemma}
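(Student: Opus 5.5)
The plan is to use the comparison principle for the linear, decoupled problem \eqref{eqn:Decoupled}, with constant multiples of the principal eigenfunction as barriers. The key observation is that for every constant $w$, the function $w\,p_i(x)$ is a stationary solution of \eqref{eqn:Decoupled}. Indeed, by \eqref{eqn:eigenfunctionsDomain} we have $d_i\Delta(wp_i)+(r_i+\lambda_i)wp_i=0$ and $\partial_\nu(wp_i)=0$ on $\partial\Omega_i$. Since the equation is linear with a bounded zeroth-order coefficient $r_i+\lambda_i$ on the bounded smooth domain $\overline{\Omega}_i$, the parabolic comparison principle with Neumann boundary conditions holds. It therefore suffices to sandwich $\tilde u_i(\cdot,t_0)$ between two multiples of $p_i$ at the single time $t_0$, and comparison then propagates the bounds to all $t>t_0$.

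\textbf{Upper bound.} Since $p_i>0$ is continuous on the compact set $\overline{\Omega}_i$, it satisfies $\min_{\overline{\Omega}_i}p_i=:m_i>0$. The function $\tilde u_i(\cdot,t_0)$ is continuous on $\overline\Omega_i$ (classical solution, $u_{i,0}$ bounded), so we can set $\overline w_i:=\|\tilde u_i(\cdot,t_0)\|_{L^\infty}/m_i$. This gives $\tilde u_i(\cdot,t_0)\le \overline w_i p_i$, and comparison yields the bound for all $t\ge t_0$. One could even take $t_0=0$ here, using $u_{i,0}\le A$.

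\textbf{Lower bound.} This is the only step needing care, because $u_{i,0}$ may vanish on large parts of $\Omega_i$. The remedy is to use $t_0>0$. Since $u_{i,0}\ge0$ and $u_{i,0}\not\equiv0$, the strong maximum principle (in its Hopf/Neumann version, applied to $e^{-Kt}\tilde u_i$ with $K\ge\|r_i+\lambda_i\|_\infty$, so that the zeroth-order coefficient becomes nonpositive) gives $\tilde u_i(x,t)>0$ for all $x\in\overline\Omega_i$ and $t>0$. This includes boundary points: if the minimum were zero at a boundary point, Hopf's lemma would force $\partial_\nu\tilde u_i<0$ there, contradicting the Neumann condition. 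Here we use that $\Omega_i$ is connected. By continuity and compactness, $\mu_i:=\min_{\overline\Omega_i}\tilde u_i(\cdot,t_0)>0$. Setting $\underline w_i:=\mu_i/\max_{\overline\Omega_i}p_i>0$, we get $\underline w_ip_i\le\tilde u_i(\cdot,t_0)$, and comparison gives the lower bound for $t\ge t_0$. If $u_{i,0}\equiv0$, we simply take $\underline w_i=0$.

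The main obstacle is justifying the strict positivity up to the boundary at time $t_0$, that is, the strong maximum principle together with the Hopf lemma under Neumann conditions, and making sure the solution is regular enough on $\overline\Omega_i\times[t_0,\infty)$ to apply them. Parabolic regularity for $t\ge t_0>0$ and the smoothness of $\partial\Omega_i$ provide this regularity. Everything else is direct comparison.
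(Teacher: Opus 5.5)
Your proposal is correct and follows the same route as the paper. Both use stationary multiples $w\,p_i$ as barriers in the comparison principle, get the upper bound directly from the data, and get the lower bound from strict positivity of $\tilde u_i(\cdot,t_0)$ for $t_0>0$ via the strong maximum principle. Your explicit use of the Hopf lemma under the Neumann condition, which gives positivity up to $\partial\Omega_i$ and hence a positive minimum on $\overline{\Omega}_i$ by compactness, makes precise a step the paper only asserts.
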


\begin{proof}
Since for any constant $w>0$, both $u_i$ and $wp_i$ solve
    \begin{equation}
      \begin{cases}
          \partial_{t}\tilde{u}_i=d_i\Delta{\tilde{u}}_i+\tilde{u}_i\left(r_i(x_i)+\lambda_i\right),&(x_i,t)\in\Omega_i\times\mathbb{R}^{+},
    \\
    
    \partial_{\nu}\tilde{u}_i=0,&(x_i,t)\in\partial{\Omega_i}\times\mathbb{R}^{+},
      \end{cases}
  \end{equation}
  with respective initial conditions of $u_{i,0}$ or and $wp_i$ then, due to the the comparison theorem, if  $wp_i(x)\leq{}u_{i,0}(x)$ (resp. $wp_i(x)\geq{}u_{i,0}(x)$) for all $x\in\Omega_i$ then $wp_i(x)\leq{}u_i(x,t)$ (resp.  $wp_i(x)\geq{}u_i(x,t)$)  for all $(x,t)\in\Omega_i\times[0,\infty)$. Since $p_i>0$ on $\partial\Omega_i$ and is bounded below, we can always pick $\overline{w}^i>0$ such that $u_{i,0}\leq{}\overline{w}_ip_0$. 

  We now focus on the lower bound. If $u_{i,0}>0$ on $\partial\Omega_i$ then because $p_i$ is bounded above, we can pick $\underline{w}_i$ such that $\underline{w}_ip_i\leq{}u_{i,0}$ in which case we may choose $t_0=0$. Otherwise, given any $t_0>0$, the strong parabolic maximum principle (see, e.g., \cite[Theorem 1.1.9]{lam2022introduction}) and fact that $u_{i,0}$ is non-zero  immediately yields that $u(x,t_0)>0$ for any $(x,t)\in\partial\Omega$.

\end{proof}

The following lemma is key to controlling the error between the perturbed and unperturbed Generalised Lotka-Volterra systems. 

\begin{lemma}\label{lma:AlphaExpDecay}
    Where $\tilde{u}=(\tilde{u}_1,...,\tilde{u}_N)$ is the solution to \eqref{eqn:Decoupled} and $\tilde{\rho}_i=\int_{\Omega_i}\tilde{u}_i(x,t)dx$, then there exists $A,B,K>0$ such that $\Vert{}\tilde{u}_i(\cdot,t)-Kp_i\Vert_{L^\infty(\Omega_i)},\left|\frac{d\tilde{\rho}_i}{dt}\right|<e^{A-Bt}$.
\end{lemma}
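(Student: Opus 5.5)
The plan is a spectral-gap argument for the linear, decoupled, self-adjoint problem \eqref{eqn:Decoupled}. The operator $L_i\phi=d_i\Delta\phi+(r_i+\lambda_i)\phi$ with Neumann boundary conditions on the bounded smooth domain $\Omega_i$ is self-adjoint on $L^2(\Omega_i)$ and has compact resolvent. Its eigenvalues are therefore $0=\mu_0>-\mu_1\geq-\mu_2\geq\dots$, and by \eqref{eqn:eigenfunctionsDomain} the top eigenvalue $0$ is simple with eigenfunction $p_i$; its simplicity follows from Krein--Rutman or from the Rayleigh quotient \eqref{eqn:Rayleigh}. Writing $\gamma_i:=\mu_1>0$ for the spectral gap, we decompose the initial datum as
\[
u_{i,0}=K_ip_i+v_{i,0},\qquad K_i=\frac{\int_{\Omega_i}u_{i,0}p_i\,dx}{\int_{\Omega_i}p_i^2\,dx}>0,
\]
so that $v_{i,0}\perp p_i$ in $L^2$. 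Then $\tilde u_i(\cdot,t)=K_ip_i+e^{tL_i}v_{i,0}$, and the spectral theorem gives
\[
\Vert\tilde u_i(\cdot,t)-K_ip_i\Vert_{L^2}\leq e^{-\gamma_it}\Vert v_{i,0}\Vert_{L^2}.
\]
The constant $K$ in the statement is understood as species-dependent, $K=K_i$; if a single constant is wanted one takes maxima and minima over $i$ of $A$ and $B$ below. The constant $K_i$ is positive because $u_{i,0}\geq0$ is not identically zero and $p_i>0$.

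Next we upgrade the estimate from $L^2$ to $L^\infty$. We use the smoothing of the semigroup: for $t\geq1$,
\[
\Vert e^{tL_i}v\Vert_{L^\infty}\leq\Vert e^{L_i}\Vert_{L^2\to L^\infty}\,\Vert e^{(t-1)L_i}v\Vert_{L^2}.
\]
The operator norm $\Vert e^{L_i}\Vert_{L^2\to L^\infty}$ is finite by standard parabolic regularity (local $L^2$--$L^\infty$ estimates or Sobolev embedding after finitely many applications of the resolvent, on a bounded smooth domain with $r_i$ bounded). On $[0,1]$, the quantity $\Vert\tilde u_i-K_ip_i\Vert_\infty$ is bounded, using Lemma \ref{lma:TransformBounds} (giving $\tilde u_i\leq\overline w_ip_i$) or simply the maximum principle. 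Combining the two ranges yields $\Vert\tilde u_i(\cdot,t)-K_ip_i\Vert_{L^\infty}\leq e^{A-Bt}$ with $B<\gamma_i$ and $A$ large.

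For the derivative of the mass we integrate \eqref{eqn:Decoupled} over $\Omega_i$. The Neumann condition kills the Laplacian term, giving
\[
\frac{d\tilde\rho_i}{dt}=\int_{\Omega_i}(r_i+\lambda_i)\tilde u_i\,dx=\int_{\Omega_i}(r_i+\lambda_i)(\tilde u_i-K_ip_i)\,dx,
\]
since $\int_{\Omega_i}(r_i+\lambda_i)p_i\,dx=-d_i\int_{\Omega_i}\Delta p_i\,dx=0$, again by the Neumann condition. Hence
\[
\left|\frac{d\tilde\rho_i}{dt}\right|\leq|\Omega_i|\,\Vert r_i+\lambda_i\Vert_\infty\,e^{A-Bt},
\]
and we enlarge $A$ to absorb the prefactor. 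Finally we take the maximum of the constants over $i\in\{1,\dots,N\}$.

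The main obstacle is the $L^2\to L^\infty$ upgrade, together with making the spectral-gap claim rigorous (simplicity of the principal eigenvalue and its isolation from the rest of the spectrum). Both are classical on bounded domains. An alternative I would try if the smoothing estimate is awkward is to interpolate the $L^2$ decay with uniform $C^1$ bounds on $\tilde u_i$ for $t\geq1$ (via a Gagliardo--Nirenberg inequality), which yields $L^\infty$ decay with a slightly smaller rate.
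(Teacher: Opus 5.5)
Your proof is correct and has the same two-step skeleton as the paper's. First, $\tilde u_i(\cdot,t)\to Kp_i$ exponentially in sup norm. Second, you integrate \eqref{eqn:Decoupled} over $\Omega_i$, using the Neumann condition and $\int_{\Omega_i}(r_i+\lambda_i)p_i\,dx=0$ (the paper's $\bar r_{i,\infty}=-\lambda_i$).

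The difference is in the first step. The paper simply cites \cite[Theorem 4.2.2]{lam2022introduction}, a general convergence result for linear parabolic problems that gives exponential convergence directly in $C(\overline{\Omega}_i)$, and then gets $K>0$ from Lemma \ref{lma:TransformBounds}. You instead prove the convergence from scratch, using self-adjointness of the Neumann operator, its spectral gap, and an $L^2\to L^\infty$ smoothing step.

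Your version gains three things:
\begin{itemize}
\item an explicit limit constant $K_i=\int_{\Omega_i}u_{i,0}p_i\,dx/\int_{\Omega_i}p_i^2\,dx$, which is visibly positive and equals $\lim_{t\to\infty}\tilde\rho_i(t)$ since $\int_{\Omega_i}p_i=1$;
\item an explicit rate, namely the gap $\gamma_i$;
\item a more direct derivative estimate, avoiding the paper's detour through $|\tilde\rho_i-K|$ and the auxiliary $\beta(t)$.
\end{itemize}

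The costs are the extra ultracontractivity ingredient and, more importantly, reliance on $L^2$-symmetry on a bounded domain. Because of that, the argument does not transfer verbatim to Lemma \ref{lma:AlphaExpDecay_UBD}. There the drift $\tilde c_i\cdot\nabla$ makes the operator self-adjoint only in $L^2(e^{\tilde c_i\cdot x/d_i}dx)$. This weight is unbounded: the admissible initial data need not lie in that space, and weighted $L^2$ decay does not control $\Vert\tilde u_i-Kp_i\Vert_{L^\infty(\mathbb{R}^n)}$. This is why the paper switches to the exponential-separation results of \cite{huska2008exponential} in that case.
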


\begin{proof}[Proof of Lemma \ref{lma:AlphaExpDecay}]
   From, for example \cite[Theorem 4.2.2]{lam2022introduction}, we have that there exists a
  $K\in\mathbb{R}$  such that \begin{equation}\label{eqn:ExponentialConvergenceFunction}
      \Vert{}\tilde{u}_i(\cdot,t)-Kp_i\Vert_{C(\overline{\Omega}_i)}\xrightarrow[t\rightarrow\infty]{}0
  \end{equation}and that this convergence is exponential (note this case is much simpler since the coefficients are time-independent). Moreover, $K>0$ due to Lemma \ref{lma:TransformBounds}. Integrating \eqref{eqn:Decoupled} with respect to $x$ then yields
  \[\frac{d\tilde{\rho}_i}{dt}=\int_{\Omega_i}\tilde{u}_i(x,t)r_i(x)dx+\lambda_i\tilde{\rho}_i,\]
whose implicit solution is $\tilde{\rho}_i(t)=\int_{0}^{t}e^{\lambda_i(t-s)}\int_{\Omega_i}\tilde{u}_i(x,s)r_i(x)dz.$ From \eqref{eqn:ExponentialConvergenceFunction}, we have that there are constants $A,B>0$ such that \begin{equation}
    \left|\tilde{\rho}_i(t)-K\right|<\frac{1}{2}e^{A-Bt}
\end{equation} 
Using \eqref{eqn:ExponentialConvergenceFunction} again we may write
  \[\frac{d\tilde{\rho}_i}{dt}=K\bar{r}_{i,\infty}+\lambda_i\tilde{\rho}_i+\beta(t),\]
  where $\bar{r}_{i,\infty}:=\int_{\Omega_i}r_i(x)p_i(x)=-\lambda_i$, where this last equality can be verified by integrating \eqref{eqn:eigenfunctionsDomain} $\beta(t)$ satisfies $\left|\beta(t)\right|<\frac{1}{2}e^{A-Bt}$ for potentially large $A>0$ and/or smaller (but still positive) $B$. The conclusion of the Lemma then follows immediately.



    
\end{proof}
Next we prove Lemma \ref{lma:MassEquivalent}.
\begin{proof}[Proof of Lemma \ref{lma:MassEquivalent}]

We construct a solution $\rho=(\rho_1,...,\rho_N)$ to \eqref{eqn:MassRelation} such that $\rho_i\in{}C^1(\mathbb{R}_{>0})$ for $i\in\{1,...,N\}$. Formally differentiating \eqref{eqn:MassRelation} we obtain
    \begin{align*}
        \frac{d\tilde{\rho}_i(t)}{dt}=\frac{d\rho_i(t)}{dt}\frac{\tilde{\rho}_i(t)}{\rho_i(t)}+\left(\sum_{j=1}^Na_{ij}\rho_j(t)+\lambda_i\right)\tilde{\rho}_j(t).
    \end{align*}
    Upon rearrangement we find that
        \begin{align*}
        \frac{d\rho_i(t)}{dt}=\rho_i(t)\left(-\lambda_i-\sum_{j=1}^Na_{ij}\rho_j(t)+\alpha_i(t)\right),
    \end{align*} 
    where $\alpha_i(t):=-\frac{d\tilde{\rho}_i(t)}{dt}\frac{1}{\tilde{\rho}_i(t)}$ converges to $0$ exponentially fast (following Lemma \ref{lma:AlphaExpDecay}, {from which it also follows that $\tilde{\rho}_i$ converges to the constant $K>0$})

    Note that
    this is an ODE of the form
    \[\frac{d\rho}{dt}=F(\rho,t),\]
where $F(\rho,t)$ continuous in $t$ (since $\tilde{\rho}_i\in{}C^1(\mathbb{R}_{>0})$) and $\frac{\partial{F}}{\partial{\rho_i}}$ is bounded in any open set $U\in{}\mathbb{R}^N_{>0}$.  The uniqueness and local existence then follows from the usual Picard-Lindelof Theorem without invoking the assumption (G).

Due to assumption (G), we have globally defined solution $\rho(t)$ to \eqref{eqn:MassRelation} and therefore also, via \eqref{eqn:DecouplingEqn}, a globally defined classical solution $u$ to \eqref{eqn:MutationEvolution}. This is necessarily unique since both $\rho$ and $\tilde{u}_i$ are uniquely determined by the initial condition.

\end{proof}

We are now in a position to prove Lemma \ref{lma:MassBoundSolution}.

\begin{proof}[Proof of Lemma \ref{lma:MassBoundSolution}]
    From  Lemma \ref{lma:TransformBounds} and \eqref{eqn:DecouplingEqn} we have
\begin{equation}\label{eqn:TransformBounds}
    \underline{w}_ip_i(x)\leq{}u_i(x,t)e^{\int_{0}^ t\sum_{j=1}^Na_{ij}\rho_j(s)ds+\lambda_it}\leq{}\overline{w}_ip_i(x),
\end{equation}
which we integrate with respect to $x$ to obtain
\begin{equation}\label{eqn:MassBounds}
    \underline{w}_i\leq{}\rho_i(t)e^{\int_{0}^ t\sum_{j=1}^Na_{ij}\rho_j(s)ds+\lambda_it}\leq{}\overline{w}_i.
\end{equation}
We now take logs to find
\[\log(\underline{w}_i)\leq{}\log(\rho_i(t))+t\left(\frac{1}{t}\int_{0}^ t\sum_{}a_{ij}\rho_j(s)ds+\lambda_i\right)\leq{}\log(\overline{w}_i).\]
Suppose that $\rho_i(t)$, and, consequently, $\log(\rho_i(t))$ are bounded above. Then there exists a constant  $\overline{c}_i$ such that
\[t\left(\frac{1}{t}\int_{0}^ t\sum_{}a_{ij}\rho_j(s)ds+\lambda_i\right)\leq{}\overline{c}_i,\]
from which the first part of the lemma follows. The second part follows analogously, and the third is a consequence of the preceding two parts.

\end{proof}

Before proving Theorem \ref{thm:Global_attractors_BD} we prove the following auxiliary Lemma which ensures that there exists a globally defined solution to \eqref{eqn:MutationEvolution} for each case we consider in that theorem.

\begin{lemma}\label{lma:BoundedTrajetories}
    The assumption (G) in Lemma \ref{lma:MassEquivalent} is satisfied in the three cases a)-c) considered in Theorem \ref{thm:Global_attractors_BD}.
\end{lemma}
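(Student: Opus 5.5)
We show that every solution of the perturbed system
\[
\frac{d\rho_i}{dt}=\rho_i\Big(-\lambda_i-\sum_{j}a_{ij}\rho_j+\alpha_i(t)\Big)
\]
stays bounded on its maximal interval of existence. By the proof of Lemma \ref{lma:MassEquivalent} it exists locally, is unique and stays positive. Boundedness on every finite interval then excludes blow-up, so (G) holds. The only input on the perturbation is Lemma \ref{lma:AlphaExpDecay}. Since $\tilde\rho_i\to K>0$ and $|d\tilde\rho_i/dt|\le e^{A-Bt}$, we get $|\alpha_i(t)|\le Ce^{-Bt}$. In particular $\int_0^\infty|\alpha_i|\,dt<\infty$ and $\sup_t|\alpha_i|<\infty$. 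Throughout, positivity of $\rho_i$ is automatic from the multiplicative form of the equation.

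\emph{Case a).} Let $P=\mathrm{diag}(p_1,\dots,p_N)$ be the Lyapunov diagonal stability matrix and $\rho^*$ the positive equilibrium. We use the Volterra function
\[
V(\rho)=\sum_i p_i\big(\rho_i-\rho_i^*-\rho_i^*\log(\rho_i/\rho_i^*)\big).
\]
Using $A\rho^*=-\Lambda$, we get
\[
\dot V=-(\rho-\rho^*)^TPA(\rho-\rho^*)+\sum_i p_i(\rho_i-\rho_i^*)\alpha_i(t)\le -\mu|\rho-\rho^*|^2+C|\alpha(t)|\,|\rho-\rho^*|,
\]
for some $\mu>0$. Since $V$ grows linearly in $|\rho|$, this gives $\dot V\le C|\alpha(t)|(1+V)$. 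Gronwall together with $\int|\alpha|<\infty$ then bounds $V$, and hence $\rho$, uniformly in time.

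\emph{Case c).} Here $\sum_ja_{ij}\rho_j=a_iS$ with $S=\sum_j\rho_j$. We get $\dot\rho_i\le\rho_i(-\lambda_i+\|\alpha\|_\infty-a_iS)$, and summing over $i$,
\[
\dot S\le S\big(M-\underline aS\big),\qquad M=\max_i(-\lambda_i)+\|\alpha\|_\infty,\quad \underline a=\min_i a_i>0 .
\]
By comparison with the logistic equation, $S\le\max(S(0),M/\underline a)$, which bounds every $\rho_i$.

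\emph{Case b), competitive part.} If $a_{ij}>0$ for all $i,j$, then $\dot\rho_i\le\rho_i(-\lambda_i+\|\alpha\|_\infty-a_{ii}\rho_i)$. Each $\rho_i$ is therefore bounded by the logistic comparison, independently of the others.

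\emph{Case b), cooperative part.} When $a_{ij}\le0$ for $i\ne j$, this is the main obstacle, because the remark after Lemma \ref{lma:MassEquivalent} shows that blow-up can occur in general. First we identify the structural property that rules it out. A $Z$-matrix with a unique positive solution of $A\rho^*=-\Lambda$, $-\Lambda>0$, should be a nonsingular M-matrix: existence of a positive $\rho^*$ with $A\rho^*>0$ gives the M-matrix property for $Z$-matrices. Nonsingular M-matrices are Lyapunov diagonally stable, so we can either reduce to case a) or argue directly. The direct argument takes the weights $\rho^*$ and sets $m(t)=\max_i\rho_i(t)/\rho_i^*$. At an index $k$ attaining the maximum, cooperativity gives $-\sum_ja_{kj}\rho_j\le -m(A\rho^*)_k=m\lambda_k$. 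Hence
\[
\frac{d}{dt}\log m\le -\lambda_k(1-m)+|\alpha_k|,
\]
with $-\lambda_k>0$, and the upper Dini derivative of $m$ satisfies this inequality at every time. For $m\ge1$ the right-hand side is at most $|\alpha(t)|$, so integrating and using $\int|\alpha|<\infty$ gives $\log m(t)\le\max(\log m(0),0)+\int_0^\infty|\alpha|$. This bounds $\rho$. The delicate point is the Dini-derivative bookkeeping at switching indices; we handle it by the standard argument for maxima of finitely many $C^1$ functions.
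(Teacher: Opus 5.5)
Your proof is correct and follows the same structure as the paper's: logistic comparison in the competitive part of b) and in c), the Volterra Lyapunov function in case a), and reduction of the cooperative part of b) to a) via Lyapunov diagonal stability of nonsingular M-matrices. The paper simply cites Pouchol et al.\ for that reduction, and you additionally bypass it with the direct comparison for $\max_i\rho_i/\rho_i^*$.

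Your closing of case a), namely $\dot V\le C|\alpha(t)|(1+V)$ combined with Gronwall and $\int_0^\infty|\alpha|\,dt<\infty$, is tighter than the paper's ``$\dot V<0$ for $|\rho-\rho^*|$ large'' step. The reason is that $V$ is unbounded near $\{\rho_i=0\}$, which lies inside the ball where $\dot V$ may be positive. Your estimate bounds $V$, and hence $\rho$, uniformly in time regardless.
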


\begin{proof}
    For parts b) (given $a_{ij}\geq{0}$ for $i\neq{j
}\in\{1,...,N\}$) and c): due to the positivity of all entries in the matrix $A$, we have that  $\frac{d\rho_i}{dt}<\rho_i(\bar{r}_{i,\infty}+\alpha_i(t)-a_{ii}\rho_i)$. Since $\alpha_{i}(t)$ is bounded (by Lemma \ref{lma:AlphaExpDecay}) we have $\frac{d\rho_i}{dt}<\rho_i(\bar{r}_{i,\infty}+\sup_{t\geq{0}}\alpha_i(t)-a_{ii}\rho_i)$. Consequently $Q=(0,\sup_{t\geq{0}}\alpha_1(t)+r_{1,\infty}]\times...\times(0,\sup_{t\geq{0}}\alpha_N(t)+r_{N,\infty}]$ is a trapping region and solutions are bounded. 

For part b), given $a_{ij}\leq{0}$ for each $i\neq{j}\in\{1,...,N\}$, under the assumption there is a unique solution $\rho^{*}$ to \eqref{eqn:GLV_ODE_generic}, is in fact a subcase of $a)$  (see \cite{pouchol2018global} Lemmas 4.1 - 4.3) which we will deal with next.


For part a),  we recall the usual Lyapunov function $V:\rho\in\mathbb{R}^n\mapsto{}V(\rho)\in\mathbb{R}$ for the generalised Lotka-Volterra system
 \[V(\rho)=\sum_{i=1}^NP_{ii}\left(\rho_i-\rho_i^{*}-\rho_{i}^{*}\log\frac{\rho_i}{\rho_i^{*{}}}\right),\]
 where $P=(P_{ij})$ is the positive diagonal matrix such that  such that $PA^T+AP$ is positive definite, and $\rho^{*}$ is the unique eqilibrium solution to \eqref{eqn:GLV_1}. It can be checked that each term in the sum is positive for all $t\geq{0}$ for $\rho_i>0$, implying $V(\rho)>0$ for all $\rho>0$ (component wise), and it is immediate that $V(\rho^{*})=0$.  

If  $V(\rho)<\infty$ for all $t\geq{0}$ then no component of $\rho$ can blow up in finite time.   Differentiating $V(\rho)$ with respect to $t$ yields
 \[\frac{dV(\rho)}{dt}=-\frac{1}{2}(\rho-\rho^{*})^T(PA^T+AP)(\rho-\rho^{*})+(\rho-\rho^{*})^TP\alpha.\]
Let \[C_1=\inf_{x\in\mathbb{R}^n,|x|=1}x^T(PA^T+AP)x>0,\text{ and }C_2=\sup_{x,y\in\mathbb{R}^n,|x|=|y|=1}x^TPy<\infty.\] Then we can estimate
 \[\frac{dV(\rho)}{dt}<-\frac{C_1}{2}|\rho-\rho^{*}|^2+C_2|\rho-\rho^{*}|\sup_{t\geq{0}}|\alpha|.\]
 The RHS is negative for $\rho$ large enough. Therefore, $V(\rho)<\infty$ for all $t\geq{0}$ and solutions are bounded.
\end{proof}

We now introduce the result from \cite{strauss1967asymptotically}  on asymptotically autonomous system, which when combined with condition (G), enables us to say the long-time behaviour of the non-autonomous systems is the same as for its their autonomous limit. To state it, we first consider the general differential equation  \begin{equation}\label{eqn:GenAut}
      \dot{y}=f(y),
  \end{equation}
   and 
     \begin{equation}\label{eqn:GenNonAut}
   \dot{y}=f(y)+g(y,t),
  \end{equation}
   where $f:\mathbb{R}^N\to\mathbb{R}^N$, and for $t\geq{0}$, $g(\cdot,t):\mathbb{R}^N\to\mathbb{R}^N$. 
   \begin{definition}
       Here $\vert\cdot\vert$ is any norm on $\mathbb{R}^N$. The function $g$ is said to mostly approach zero if for every compact set $B\subset\mathbb{R}^N$ there is a function $E_B:[1,\infty)\to\mathbb{R}^{+}$ such that $\lim_{s\rightarrow\infty}E_B(s)=0$ and
    \[\left\vert\int_{s}^{s+t}g(w(u),u)du\right\vert\leq{}E_B(s),\]
    for all $t\in[-1,1]$, $s\in[1,\infty)$, and $w:\mathbb{R}\to{B}$.
   \end{definition}

\begin{lemma}[Adapted from Theorem 2.4 and Corollary 3.3 of \cite{strauss1967asymptotically}]\label{lma:AsympAut}

    Suppose that $g$ mostly approaches $0$, $y^{*}\in\mathbb{R}^N$ is a global attractor for \eqref{eqn:GenAut}, and   $y(t)$ is a bounded solution to \eqref{eqn:GenNonAut}. Then
 $\lim_{t\rightarrow\infty}y(t)=y^{*}$.
\end{lemma}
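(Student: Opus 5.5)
The plan is to follow the classical Markus--Strauss--Yorke route. First show that the $\omega$-limit set $\omega(y)$ of the bounded solution $y$ of \eqref{eqn:GenNonAut} is nonempty, compact and invariant under the flow of \eqref{eqn:GenAut}. Then show that a compact invariant set of \eqref{eqn:GenAut} whose points are all attracted to $y^{*}$ must be $\{y^{*}\}$.

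Throughout I take $f$ locally Lipschitz, so \eqref{eqn:GenAut} has unique solutions depending continuously on data. I also read ``global attractor'' as including Lyapunov stability of $y^{*}$; this is the case in our applications, where the Lotka--Volterra Lyapunov function is available or the equilibrium is hyperbolic.

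\textbf{Step 1: $\omega(y)$ is nonempty and compact.} Let $B$ be a compact set containing the closure of $\{y(t):t\ge 0\}$. Then $\omega(y)\subset B$ is nonempty and compact.

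\textbf{Step 2: $\omega(y)$ is invariant (the main obstacle).} Fix $z\in\omega(y)$ with $y(t_n)\to z$, and set $y_n(s)=y(t_n+s)$ for $s\in[-T,T]$. The difficulty is that $g$ is controlled only in the weak integrated sense of ``mostly approaches zero'', not pointwise, so there is no true equicontinuity of the $y_n$. I will use only asymptotic equicontinuity. Applying the definition with $w=y$ (which takes values in $B$) gives, for $|t-s|\le 1$ and $s\ge 1$,
\[
|y(t)-y(s)|\le \sup_B|f|\,|t-s|+E_B(s).
\]
This is enough for a diagonal Arzel\`a--Ascoli argument: first extract a subsequence converging on a countable dense set of times, then use the estimate to upgrade this to uniform convergence of a subsequence $y_n\to\varphi$ on $[-T,T]$, with $\varphi$ continuous.

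Next I write
\[
y_n(s)=y_n(0)+\int_0^s f(y_n)\,du+\int_{t_n}^{t_n+s}g(y(u),u)\,du .
\]
I split the last integral into at most $\lceil T\rceil$ pieces of length at most $1$. Each piece is bounded by $E_B(t_n+k)$ with $k\geq -T$, and this tends to $0$ as $n\to\infty$. Passing to the limit gives $\varphi(s)=z+\int_0^s f(\varphi)\,du$. So $\varphi$ is the unique solution of \eqref{eqn:GenAut} through $z$, and $\varphi(s)\in\omega(y)$ for every $s$. Letting $T\to\infty$ and using a further diagonal argument, the whole orbit of \eqref{eqn:GenAut} through $z$ lies in $\omega(y)$.

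\textbf{Step 3: $\omega(y)=\{y^{*}\}$.} Let $K=\omega(y)$, which is compact and invariant. Every point of $K$ is attracted to $y^{*}$ and $K$ is closed, so $y^{*}\in K$.

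Suppose $z\in K$ with $|z-y^{*}|>\varepsilon>0$, and let $\delta$ be the stability radius associated with $\varepsilon$. The backward orbit $\varphi(-s)$ of $z$ stays in $K$, so it has a limit point $w\in K$ along a sequence $s_k\to\infty$. The forward orbit of $w$ enters the ball $B_{\delta/2}(y^{*})$ at some finite time $\tau$. By continuous dependence, $\varphi(-s_k+\tau)\in B_\delta(y^{*})$ for large $k$. Stability then forces $\varphi(\sigma)\in B_\varepsilon(y^{*})$ for all $\sigma\ge -s_k+\tau$, and in particular at $\sigma=0$, where $\varphi(0)=z$. This contradicts $|z-y^{*}|>\varepsilon$. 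Hence $K=\{y^{*}\}$.

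\textbf{Conclusion.} Since $y$ is bounded and its only limit point is $y^{*}$, we get $y(t)\to y^{*}$.
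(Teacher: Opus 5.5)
Your proof is correct. The paper gives no argument of its own and simply imports the result from Strauss--Yorke, and your proof follows essentially that classical route: the integrated smallness of $g$ gives asymptotic equicontinuity of the translates $y(t_n+\cdot)$, hence invariance of $\omega(y)$ under the limiting flow, and Lyapunov stability of $y^{*}$ then forces this compact invariant set to be $\{y^{*}\}$.

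Reading ``global attractor'' as including stability (and taking $f$ locally Lipschitz, as the Lotka--Volterra field is) is not just convenient but necessary. For an equilibrium that attracts every point without being stable (Vinograd-type), a vanishing perturbation can keep relaunching the solution around a homoclinic loop, so the conclusion fails. Stability is automatic if ``global attractor'' is taken in the usual sense of uniformly attracting bounded sets.
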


All three parts a)-c) then follow from the next lemma. 

\begin{proof}[Proof of Theorem \ref{thm:Global_attractors_BD}:]
     We write the ODE system for the population masses 
\begin{equation}\label{eqn:PopDynFull_1}
    \begin{cases}
        \frac{d\rho_i}{dt}=\rho_i\left(-\lambda_i-\sum_{i=1}^{N}a_{ij}\rho_j\right)+\alpha_i(t)\rho_i,\\
        \rho_i(0)=\int_{\Omega_i}u_{i,0}(y)dy,
    \end{cases}
\end{equation}
where $\alpha_i(t)$ converges to $0$ exponentially fast for each $i\in\{1,...,N\}$. The conclusion follows from Lemma \ref{lma:AsympAut} once we show $g(t,\rho):=(\rho_1\alpha_1(t),...,\rho_N\alpha_N(t))$ mostly approaches $0$. We already know, by Lemma \ref{lma:BoundedTrajetories} that the solution $\rho$ to \eqref{eqn:PopDynFull_1} is bounded.

We take $|\cdot{}|_1$ to be the $l_1$ norm. Let $B$ be any compact set in $\mathbb{R}^N$ and $w$ any continuous function from $\mathbb{R}$ to $B$. Then
\begin{align*}
   \left\vert \int_{s}^{s+t}g(u,w(u))du\right\vert_1&= \sum_{i=1}^N\left\vert \int_{s}^{s+t}\alpha_i(u)w_i(u)du\right\vert\\
   &<N\sup_{x\in{B}}|x|_1\sum_{i=1}^N\int_{s}^{s+t}|\alpha_i(u)|du\\
   &<N^2\sup_{x\in{B}}|x|_1e^{A-B{s}},
\end{align*}
where $A,B>0$ are chosen as in Lemma \ref{lma:AlphaExpDecay} such that $\alpha_i(t)<e^{A-Bt}$ for $i=1,...,N$. This shows that $g$ mostly approaches $0$ and the conclusion follows.

\end{proof}

Alternatively, we could use the Lyapnov functions introduced to show the solution is bounded in cases a) and c). But we emphasise that there was no need to construct a Lyapunov function in case b).

\begin{proof}[Alternative proof of Theorem \ref{thm:Global_attractors_BD} part a)]


Suppose that $\rho(t)$ does not converge to $\rho^{*}$. If there is an open set $U\ni\rho^{*}$ such that for all large enough $t$, $\rho(t)\notin{}U$  then due to the positive-definiteness of $PA+AP^T$ and exponential decay of $\alpha$, we have that eventually $\frac{dV(\rho)}{dt}<-\frac{1}{3}(\rho-\rho^{*})^T(PA+AP^T)(\rho-\rho^{*})<0$.  But this would imply that $V(\rho(t))<0$ eventually which contradicts $V\geq{0}$, so no such open set exists. However, due to the continuity of the vector field, any such open set is eventually a trapping region.  Hence we must have $\rho(t)\xrightarrow[t\rightarrow\infty]{}\rho_{}^{*}$.

\end{proof}

\begin{proof}[Alternative proof of Theorem \ref{thm:Global_attractors_BD} part c)]
    We assume that $a_{ij}=a_i>0$ for $i,j\in\{1,...,N\}$. It is clear that $\rho_i(t)$ is bounded above since 
    \[\frac{d\rho_i}{dt}\leq{}\rho_i(r_{i,M}-a_i\rho_i),\]
    and hence there is a barrier at $\rho_i=\frac{r_{i,M}}{a_i}$. Therefore, from part 1 of Lemma \ref{lma:MassBoundSolution}, we have that either $\limsup_{t>0}a_i\frac{1}{t}\int_{0}^ t\rho_i(s)ds=-\lambda_i$ or $\rho_i(t)\xrightarrow[t\rightarrow\infty]{}0$.  We have assumed there is a unique (negative) minimum among $\frac{\lambda_1}{a_1}$, $\frac{\lambda_2}{a_2}$,...,$\frac{\lambda_N}{a_N}$, so without loss of generality let these be ordered $\frac{\lambda_1}{a_1}<\frac{\lambda_2}{a_2}\leq{}...\leq{}\frac{\lambda_N}{a_N}$ so that $\underline{i}=1$. Therefore  $\rho_i(t)\xrightarrow[t\rightarrow\infty]{}0$ for all $i\in\{2,...,N\}$. 
    
    We now use this fact to obtain the limit of $\rho_1(t)$. Let $I(t)=\frac{1}{\rho_{1}(t)}$ so that $I$ solves
    \[\frac{dI}{dt}=(\lambda_{1}-\beta(t)+\alpha_1(t))I+a_{1},\]
    where $a_{i}\sum_{j=1,j\neq{i}}^N\rho_j(t)=:\beta(t)\xrightarrow[t\rightarrow\infty]{}0$ and this can be shown to exponentially fast using \eqref{eqn:MassBounds} and the strict inequalities $\frac{\lambda_1}{a_1}<\frac{\lambda_j}{a_j}$ for $j\in\{1,...,N\}$.  It's clear that $I(t)=I(0)e^{\lambda_1t+\int_{0}^t\alpha_1(s)-\beta(s)ds}+a_{1}\int_{0}^t{}e^{\lambda_{1}(t-s)+\int_{s}^ t\alpha_1(z)-\beta(z)dz}ds$ is bounded above uniformly as $t\rightarrow\infty$.  Consequently the only possibility for the long term dynamics of $I(t)$ is that $I(t)\xrightarrow[t\rightarrow\infty]{}-\frac{a_1}{\lambda_1}$, and so $\rho_1(t)\xrightarrow[t\rightarrow\infty]{}-\frac{\lambda_1}{a_1}.$

We now obtain the final converge for $\tilde{u}_1(\cdot,t)$. From Lemma \ref{lma:AlphaExpDecay} we have that $u_1(x,t)=\tilde{u}_1(x,t)e^{-a_{1}\int_{0}^t\sum_{j=1}^N\rho_j(s)-\lambda_1t}\xrightarrow[t\rightarrow\infty]{}K_2p_1(x)$ for some constant $K_2>0$. By writing $\Sigma(x,t):=u_1(x,t)-K_2p_1(x)$, we have $\frac{K_2p_1(x)+\Sigma(x,t)}{K_2+\int_{\Omega_1}\Sigma(x,t)dx}=p_1(x)+\Sigma_1(x,t)$ where $\Sigma_1(x,t)\xrightarrow[t\rightarrow\infty]{}0$ uniformly in $x$ and exponentially fast.

\end{proof}

\subsection{Proof of Theorem \ref{thm:Global_attractors_UBD}}

The proof of Theorem 2.2 follows the same structure as the proof of Theorem 2.1, relying on analogous lemmas. The first is analogous to Lemma \ref{lma:AlphaExpDecay}.

\begin{lemma}\label{lma:AlphaExpDecay_UBD}
    Where $\tilde{u}=(\tilde{u}_1,...,\tilde{u}_N)$ is the solution to \eqref{eqn:Decoupled_UBD} and $\tilde{\rho}_i=\int_{\Omega_i}\tilde{u}_i(x,t)dx$, then there exists $A,B,K>0$ such that $\Vert{}\tilde{u}_i(\cdot,t)-Kp_i\Vert_{L^\infty(\mathbb{R})},\left|\frac{d\tilde{\rho}_i}{dt}\right|,\alpha_i(t)<e^{A-Bt}$, where $\alpha_i(t)=-\frac{d\tilde{\rho}_i(t)}{dt}\frac{1}{\tilde{\rho}_i(t)}$.
\end{lemma}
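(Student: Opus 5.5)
The plan follows the proof of Lemma \ref{lma:AlphaExpDecay}, with the Neumann spectral theory replaced by the whole-space theory for the drifted operator $\mathcal{L}_i\phi=d_i\Delta\phi+\tilde{c}_i\cdot\nabla\phi+(r_i+\lambda_i)\phi$. The key tool is a conjugation that removes the drift. Setting $\tilde{u}_i=e^{-\tilde{c}_i\cdot x/(2d_i)}v_i$ turns \eqref{eqn:Decoupled_UBD} into the self-adjoint problem
\[
\partial_t v_i=d_i\Delta v_i+\Bigl(r_i(x)-\frac{|\tilde{c}_i|^2}{4d_i}+\lambda_i\Bigr)v_i .
\]
Because $r_i<-d$ for $|x|>R$, the potential is confining at infinity. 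This is the source of the shift $\tilde{c}_i^2/(4d_i)$ in \eqref{eqn:RayleighUBD}.

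\textbf{Step 1: spectral gap.} For the Schrödinger operator above, the essential spectrum lies in $(-\infty,-d-|\tilde{c}_i|^2/(4d_i)+\lambda_i]$. The principal eigenvalue $0$ is isolated and simple, with eigenfunction $q_i=e^{\tilde{c}_i\cdot x/(2d_i)}p_i$. The spectral gap therefore gives exponential convergence
\[
v_i(\cdot,t)\to\frac{\langle v_i(0),q_i\rangle}{\|q_i\|_2^2}\,q_i \quad\text{in } L^2 .
\]
This requires $v_i(0)=e^{\tilde{c}_i\cdot x/(2d_i)}u_{i,0}\in L^2$. That holds provided $C_2>|\tilde{c}_i|/(2d_i)$; otherwise I would argue with a weighted space, or first run the equation for a unit time and use Gaussian decay of the heat kernel.

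\textbf{Step 2: from $L^2$ to $L^\infty$ and to $L^1$ in the original variables.} Parabolic regularity on unit cylinders upgrades the convergence to $L^\infty_{\mathrm{loc}}$. For the tails I would use super-solutions of the form $Me^{-\mu|x|-\delta t}$ on $|x|>R'$, built from $r_i<-d$ and valid for suitable small $\mu$. Together these give
\[
\|\tilde{u}_i(\cdot,t)-Kp_i\|_{L^\infty}+\|\tilde{u}_i(\cdot,t)-Kp_i\|_{L^1}\le e^{A-Bt},
\]
with $K>0$. Positivity of $K$ follows from the strong maximum principle together with a sub-solution $wp_i$, exactly as in Lemma \ref{lma:TransformBounds}; comparison works on $\mathbb{R}^n$ for solutions with exponential decay.

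\textbf{Step 3: the mass and $\alpha_i$.} Integrating \eqref{eqn:Decoupled_UBD} over $\mathbb{R}^n$, the diffusion and drift terms vanish by the decay, so
\[
\frac{d\tilde{\rho}_i}{dt}=\int (r_i+\lambda_i)\,\tilde{u}_i\,dx .
\]
Integrating \eqref{eqn:eigenfunctions} gives $\int(r_i+\lambda_i)p_i=0$. Combining this with the exponentially decaying tails of $\tilde{u}_i$ and $p_i$ (which control the unbounded part of $r_i$, or one uses that $r_i$ is bounded above and bounded on compact sets), we get $|d\tilde{\rho}_i/dt|\le e^{A-Bt}$. Hence $\tilde{\rho}_i\to K>0$ exponentially fast, so $\tilde{\rho}_i$ is bounded below for large $t$. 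By Lemma \ref{lma:TransformBounds}'s analogue it is also bounded below at all positive times. This bounds $\alpha_i=-\tilde{\rho}_i'/\tilde{\rho}_i$ by $e^{A'-Bt}$.

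\textbf{Main obstacle.} The hardest part is Step 2, the uniform-in-space control of the tails on $\mathbb{R}^n$. Exponential convergence in $L^2$ does not by itself give $L^\infty$ or $L^1$ convergence. The conjugation weight $e^{\tilde{c}_i\cdot x/(2d_i)}$ grows in one direction, so the tails must be controlled separately by the super-solution barriers, with their decay rate matched to that of $p_i$.
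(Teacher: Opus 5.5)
\paragraph{Overall assessment.} Your route is genuinely different from the paper's, but Step 1 leaves a gap: it does not cover the initial data the lemma allows, and your proposed repair does not work.

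\paragraph{Comparison with the paper.} The paper does not prove the convergence itself. It imports $\Vert\tilde u_i(\cdot,t)-Kp_i\Vert_{L^\infty}\to0$ from the exponential-separation theorems of \cite{huska2008exponential}, via Lemmas 5--6 of \cite{adaptivedynamicsdivergingfitness}. It then repeats the computation of Lemma \ref{lma:AlphaExpDecay}. You instead use the constant drift to conjugate to a self-adjoint Schr\"odinger operator, and combine its spectral gap with local parabolic estimates and exterior barriers. That is a reasonable, more self-contained strategy, and Steps 2--3 are essentially sound.

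\paragraph{The gap in Step 1.} Step 1 needs $v_i(0)=e^{\tilde c_i\cdot x/(2d_i)}u_{i,0}\in L^2$. This means $u_{i,0}$ must decay faster than $e^{-|\tilde c_i||x|/(2d_i)}$ in the direction $\tilde c_i$. The standing assumption is only $u_{i,0}\le e^{C_1-C_2|x|}$ with arbitrary $C_2>0$.

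Running the flow for a unit time does not fix this. The only hypothesis at infinity is $r_i<-d$, so $r_i$ may be bounded below, e.g.\ asymptotically constant. Take $u_{i,0}=e^{-C_2|x|}$ and compare with the constant-coefficient problem with potential $\inf r_i+\lambda_i$. This gives $\tilde u_i(x,t)\ge c_t e^{-C_2|x|}$ for every $t$, because convolving an exponential tail with a Gaussian returns the same exponential rate. Hence $v_i(\cdot,t)\notin L^2$ for all $t$ once $C_2\le|\tilde c_i|/(2d_i)$. Tails improve only if $-r_i\to\infty$ fast enough.

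``A weighted space'' is not yet an argument either. In any other weighted $L^2$ the operator is not self-adjoint, so deriving semigroup decay from a spectral gap is exactly what would then have to be proved.

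\paragraph{A possible fix.} Split $u_{i,0}=\chi u_{i,0}+(1-\chi)u_{i,0}$ with $\chi\in C_c^\infty$ equal to $1$ on a large ball.
\begin{itemize}
\item Step 1 applies to the first piece.
\item For the second piece, solve in $\{|x|>R'\}$ with zero Dirichlet data. Your barrier $Me^{-\mu|x|-\kappa t}$, with $\mu\le C_2$ small, gives decay $e^{-\kappa t}$.
\item Cutting this solution off near $|x|=R'$ produces a Duhamel integral whose source is compactly supported and $O(e^{-\kappa s})$. Step 1 applies to it again.
\end{itemize}
Alternatively, invoke exponential separation as the paper does.

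\paragraph{Two smaller points.} First, your barrier needs $\lambda_i+\sup_{|x|>R'}r_i<0$, which means $\lambda_i<d$ if you only use $r_i<-d$. This is stronger than the isolation condition $\lambda_i<d+|\tilde c_i|^2/(4d_i)$ implicit in Step 1, and should be stated explicitly. It holds, for example, when $\lambda_i<0$.

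Second, positivity of $K$ does not follow from a sub-solution $wp_i$ as in Lemma \ref{lma:TransformBounds}. For compactly supported $u_{i,0}$, $\tilde u_i(\cdot,t_0)$ has Gaussian tails while $p_i$ may decay only exponentially, so no $w>0$ works. When $v_i(0)\in L^2$, however, your own projection gives
\[
K=\int_{\mathbb{R}^n}e^{\tilde c_i\cdot x/d_i}u_{i,0}\,p_i\,dx\Big/\Vert q_i\Vert_{L^2}^2>0 .
\]
In the general case, positivity follows from the split above.
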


\begin{proof}[Proof of Lemma \ref{lma:AlphaExpDecay_UBD}]

The exponential separation results contained in Theorems 2.1 and 2.2 in \cite{huska2008exponential} allow us to show that \[\Vert{}\tilde{u}_i(\cdot,t)-Kp_i\Vert_{L^\infty(\mathbb{R})}\xrightarrow[t\rightarrow\infty]{}0,\]
     where $K>0$.  The details of this step can are contained in Lemmas 5 and 6 in \cite{adaptivedynamicsdivergingfitness}. From here, the procedure is identical to the proof of Lemma \ref{lma:AlphaExpDecay} where we use the solution $p_i$ $i=1,...,N$ to \eqref{eqn:eigenfunctions} in place of the solution to \eqref{eqn:eigenfunctionsDomain}.

\end{proof}

\begin{proof}[Proof of Lemma \ref{lma:MassEquivalent_UBD}]
    This is identical to the proof of Lemma \ref{lma:MassEquivalent}, where we use Lemma \ref{lma:AlphaExpDecay_UBD} in place of Lemma \ref{lma:AlphaExpDecay}.
\end{proof}

\begin{proof}[Proof of Theorem \ref{thm:Global_attractors_UBD}]
The proof of Theorem \ref{thm:Global_attractors_UBD} then follows identically to the proof of Theorem \ref{thm:Global_attractors_BD}. Specifically, Lemma \ref{lma:MassEquivalent_UBD} 
establishes the global existence of classical solutions to \eqref{eqn:ShiftingEnvironment}. Integrating \eqref{eqn:ShiftingEnvironment} with respect to $x$ to yields the
\begin{equation}\label{eqn:PopDynFull}
    \begin{cases}
        \frac{d\rho_i}{dt}=\rho_i\left(-\lambda_i+\alpha_i(t)-\sum_{i=1}^{N}a_{ij}\rho_j\right),\\
        \rho_i(0)=\int_{\mathbb{R}}u_{i,0}(y)dy.
    \end{cases}
\end{equation}
Since
$|\alpha_i(t)|\xrightarrow[t\rightarrow\infty]{}0$ for $i\in\{1,..,N\}$ this equation is asymptotically autonomous with the same dynamics as \eqref{eqn:GLV_UBD}. {Therefore (G) is satisfied in cases a)-c) as before since this result depends only on the structure of the dynamical system \eqref{eqn:PopDynFull} for $\rho$ and the exponential decay of $\alpha$ (which is obtained in Lemma \ref{lma:AlphaExpDecay_UBD}) .}
\end{proof}
\section{Trajectorial properties of the non-autonomous LV dynamics}\label{sec:RemarksDynamics}
As has been shown in the proof of Theorem \ref{thm:Global_attractors_UBD}, the long-time behaviour of the non-autonomous Lotka-Volterra system \eqref{eqn:PopDynFull} is the same as that of the autonomous one \eqref{eqn:GLV_UBD}, which plays a crucial role in the analysis of our models.  In this section we further compare trajectorial properties of the two dynamics, which is of its own interest. To this end, we consider the  autonomous unperturbed system
\begin{equation}\label{eqn:PopDynFull_3}
        \frac{d\widetilde{\rho}_i}{dt}=\widetilde{\rho}_i\left(-\lambda_i-\sum_{i=1}^{N}a_{ij}\widetilde{\rho}_j\right),
   \end{equation}
and its non-autonomous perturbation
\begin{equation}\label{eqn:PopDynFull_2}
      \frac{d\rho_i}{dt}=\rho_i\left(-\lambda_i-\sum_{i=1}^{N}a_{ij}\rho_j\right)+\alpha_i(t)\rho_i,
\end{equation}
\begin{figure}
    \centering
    \includegraphics[width=0.5\linewidth]{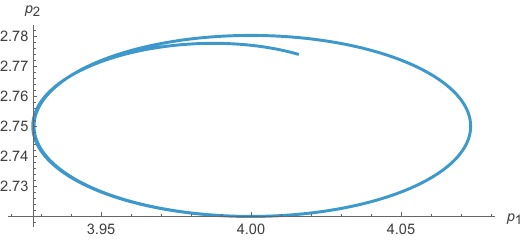}
    \caption{Trajectory of perturbed dynamics}
    \label{fig:two exxponents}
\end{figure}
Due to the choice of $\alpha(t)\to 0$, it is clear that the bounded trajectories of \eqref{eqn:PopDynFull_3} will have the  solutions of \eqref{eqn:PopDynFull_2} as $\omega$-limit sets. However, the most ``sensitive" properties of the dynamics will be lost. 

\begin{lemma}
    The system \eqref{eqn:PopDynFull_2} will have neither periodic trajectories nor resting points.
\end{lemma}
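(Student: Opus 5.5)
We must show that \eqref{eqn:PopDynFull_2} has no rest points and no nonconstant periodic orbits. The statement can only hold under a genericity assumption on $\alpha$, since for $\alpha\equiv 0$ the system reduces to \eqref{eqn:PopDynFull_3}, which certainly has rest points. The plan is therefore to assume that $\alpha_i(t)\to0$ without $\alpha_i$ being eventually constant, and in particular that $\alpha_i$ is strictly monotone (or at least nowhere locally constant). This is the situation produced by Lemma \ref{lma:AlphaExpDecay} in the non-degenerate case. We also restrict attention to trajectories with positive components, because on the invariant faces $\{\rho_i=0\}$ the $i$-th equation is trivially satisfied and the argument is applied to the remaining coordinates.

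For rest points, suppose $\rho(t)\equiv\rho^\circ$ is constant. For every index $i$ with $\rho_i^\circ>0$, dividing the $i$-th equation by $\rho_i^\circ$ gives
\[\alpha_i(t)=\lambda_i+\sum_{j}a_{ij}\rho^\circ_j\quad\text{for all } t.\]
So $\alpha_i$ would be constant, and since $\alpha_i\to0$ it would be identically zero, which contradicts the assumption. If every component of $\rho^\circ$ vanishes, the point is the origin. Either the origin is excluded by the convention adopted above (it is the trivial extinct state, and the initial data are nonzero), or we note explicitly that it is the single degenerate exception.

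For periodic orbits, suppose $\rho$ is a solution with $\rho(t+T)=\rho(t)$ and $T>0$, taking positive components as above. The key step is to use the log-variables $v_i=\log\rho_i$, which give
\[\dot v_i(t)=-\lambda_i-\sum_j a_{ij}\rho_j(t)+\alpha_i(t).\]
Because $\rho$ is $T$-periodic, the functions $v_i$ and $\dot v_i-\alpha_i$ are $T$-periodic as well, so $\alpha_i(t+T)-\alpha_i(t)=\dot v_i(t+T)-\dot v_i(t)=0$ for all $t$. Hence $\alpha_i$ is $T$-periodic. A periodic function that tends to zero is identically zero, so $\alpha_i\equiv0$, again a contradiction. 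This computation also covers the rest-point case, since a constant solution is periodic for every $T$.

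The computations themselves are straightforward. The main obstacle is pinning down exactly which hypothesis on $\alpha$ makes the statement true: "not identically zero" suffices for at least one component. The care lies in handling components on the boundary $\rho_i=0$, where $\alpha_i$ drops out. A rest point or periodic orbit contained in a face is still ruled out as long as some surviving coordinate $i$ has $\alpha_i\not\equiv0$. I would state this genericity condition explicitly at the start of the proof and then run the two arguments above.
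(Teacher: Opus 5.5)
Your proof is correct and follows the same route as the paper: if $\rho$ is $T$-periodic (or constant), dividing the $i$-th equation by $\rho_i>0$ forces $\alpha_i$ to be $T$-periodic (or constant), and together with $\alpha_i\to0$ this gives $\alpha_i\equiv0$. You are more careful than the paper, which simply asserts $\alpha(t+T)\ne\alpha(t)$: you isolate the hypothesis actually needed ($\alpha_i\not\equiv0$ for some coordinate with $\rho_i>0$) and correctly note that the origin is always a rest point and must be excluded; your aside that $\alpha_i$ is strictly monotone does not follow from Lemma \ref{lma:AlphaExpDecay}, but you never use it.
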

\begin{proof}
The first statement is a consequence of $\alpha_i(t)$ not being periodic. This is because in order for $\rho_i(t+T) = \rho_i(t)$, we must have  $\dot{\rho}_i(t+T) = {\dot{\rho}_i}(t)$. But the right hand side  of \eqref{eqn:PopDynFull_2} will be $\rho_i(-\lambda_i - \sum\limits_{i=1}^N a_{ij}\rho_j + \alpha_i(t+T))$, which cannot be  equal to the right hand side since $\alpha(t+T)\ne \alpha(t)$.

    The second part is straightforward: if all $\rho_i(t)$ are constant, the equations \eqref{eqn:PopDynFull_2} cannot hold. 
\end{proof}
We start the comparison between the behaviour of the trajectories of \eqref{eqn:PopDynFull_3} and of the unperturbed trajectories by two illustrative examples.
\begin{example}
Let us examine the following autonomous system of equations:
\begin{equation}
\label{eq: example 1}
    \begin{cases}
      \dot{\widetilde{\rho}_1} = \widetilde{\rho}_1(1.1 - 0.4\widetilde{\rho}_2)\\
      \dot{\widetilde{\rho}_2} = \widetilde{\rho}_2(0.1 \widetilde{\rho}_1-0.4).
    \end{cases}
\end{equation}
It has a unique equilibrium point in the interior of the first quadrant : $\widetilde{\rho}_1 =4, \widetilde{\rho}_2= 2.75 $, which is a centre. Therefore, all trajectories in the first quadrant are periodic. Additionally, the system has the  conserved quantity: $H(\widetilde{\rho}_1, \widetilde{\rho}_2) = 0.4\log \widetilde{\rho}_1 -0.1 \widetilde{\rho}_1 + 1.1 \log \widetilde{\rho}_2  - 0.4 \widetilde{\rho}_2$. The trajectories are the constant level sets of the invariant function. 

Consider an  arbitrary perturbation of \eqref{eq: example 1} by small $(\alpha_1(t), \alpha_2(t))$ that tend to 0 as $t\to+\infty$. 
\begin{equation}
\label{eq: example 12}
    \begin{cases}
      \dot{\rho_1} = \rho_1(\alpha_1(t)+1.1 - 0.4\rho_2)\\
      \dot{\rho_2} = \rho_2(\alpha_2(t)+0.1 \rho_1-0.4).
    \end{cases}
\end{equation}
Then by taking the directional derivative, the change in $H$ along the flow of \eqref{eq: example 1} is given by
\[\alpha_1 (0.4\, -0.1 \rho_1)+\alpha_2 (1.1\, -0.4 \rho_2)\ne 0.\] However, the difference, being proportional to $\alpha_1, \alpha_2$, tends to 0 as $t\to \infty.$ 

For numerical computations and illustration, we consider a concrete example with $\alpha_1 = e^{-t},\alpha_2= e^{-2t}$. An arbitrary trajectory of \eqref{eq: example 12} with such a perturbation  is depicted in Figure \ref{fig:two exxponents}. Clearly, the non-periodic trajectories of the perturbed system (\ref{eq: example 12}) have the periodic trajectories of \eqref{eq: example 1} as $\omega-$limit sets. 

Consider two trajectories of \eqref{eq: example 1} and \eqref{eq: example 12} with the same perturbation as above, i.e. $\alpha_1 = e^{-t},\alpha_2= e^{-2t}$. To make the initial perturbation small, we set the starting  time $t=50$ and assume that the curves start close to each other: $\widetilde{\rho}_1(50) = 10,\widetilde{\rho}_2(50) = 10, \rho_1(50) = 10.00001, \tilde{\rho}_2(50) = 10.00001$. Therefore,  when $t=50$, the perturbation has order  $10^{-22}$ and the difference in the initial conditions  $10^{-5}$.  

However, the trajectories do not remain this close for all values of $t$. Figure \ref{fig: norm of the distance} depicts the difference $||\widetilde{\rho} - \rho||(t)$ (here, the expression $||\cdot||$ is the standard Euclidean vector norm). Note that even for a relatively small timescale the distance between the two trajectories at the same time is a few orders of magnitude larger than the initial one.

\begin{figure}
\centering
    \subfigure[Norm of the difference between the solutions of \eqref{eq: example 1} and \eqref{eq: example 12} on a smaller timescale]{\includegraphics[scale=.6]{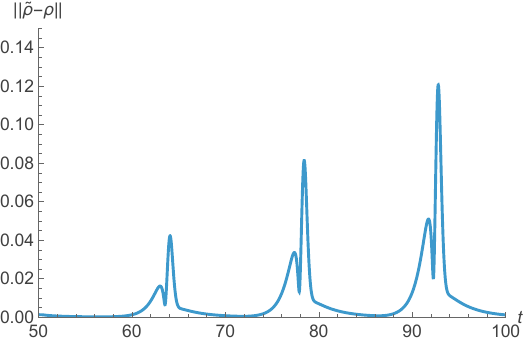}}
    \subfigure[Norm of the difference between the solutions of \eqref{eq: example 1} and \eqref{eq: example 12} on a larger  timescale]{\includegraphics[scale=.6]{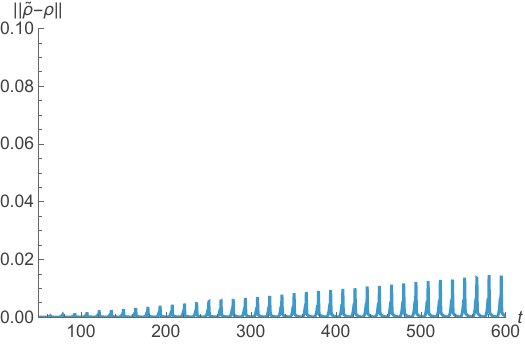}}
    \caption{Norm of the distance between perturbed and unperturbed trajectories. }
\label{fig: norm of the distance}
\end{figure}
    
\begin{figure}
    \centering
    \includegraphics[scale =.3]{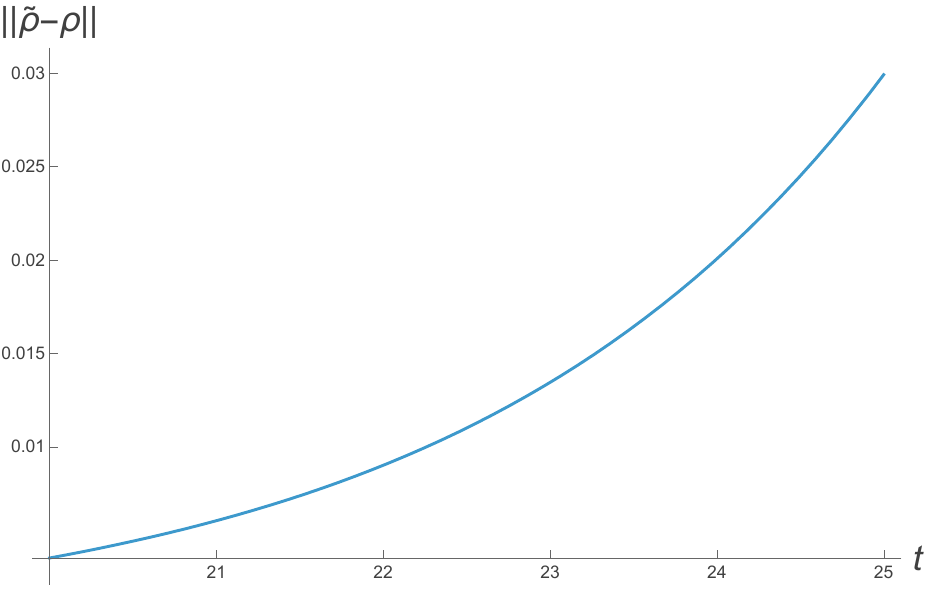}
    \caption{The norm of difference  $||\widetilde{\rho} - \rho||$ from Example \ref{ex:2}.}
    \label{fig:diffnumber2}
\end{figure}
\end{example}
\begin{example}
\label{ex:2}
To further the point of the previous example, consider the following two systems:
    \begin{equation}
\begin{cases}\dot{\widetilde{\rho}}_1=\widetilde{\rho}_1 (1.1\, -0.4 \widetilde{\rho}_2),\\\dot{\widetilde{\rho}}_2=\widetilde{\rho}_2 (0.4\, -0.1 \widetilde{\rho}_1),\\
\widetilde{\rho}_1(10)=6,\\
\widetilde{\rho}_2(10)=4
        \end{cases},\ \ \ \ \ \begin{cases}
            \dot{\rho}_1=\rho_1 (e^{-t}-0.4 \rho_2+1.1),\\\dot{\rho}_2=\rho_2 (e^{-2 t}-0.1 \rho_1+0.4),\\\rho_1(10)=6.00001,\\ \rho_2(10)=4.00001
        \end{cases}
    \end{equation}
    The plot of the norm of difference beteen the two trajectories is presented in Figure \ref{fig:diffnumber2} -- observe that it is an increasing function. 
\end{example}

The two examples above demonstrate that  the notion of ``closeness`` of the trajectories is defined as the closeness of the two sets: \cite{rasmussen2008bifurcations} establishes that for the same value of $t$ the two trajectories exhibit  different growth speeds. However, for finite time this difference can be levelled by choosing an appropriate shift in time $\tau$ for the perturbed system. Below we state an  altered version of the main theorem of \cite{rasmussen2008bifurcations} in which we consider $t\rightarrow +\infty$ instead of $t\to-\infty$ as in the original notation.
\begin{theorem}[\cite{rasmussen2008bifurcations}]

  Consider a non-autonomous differential equation $\dot{x} = f(t,x)$ with $t\in \mathbb{R}$  and $x\in D\subset \mathbb{R}^N$ with the flow $\lambda(t,\tau,x)$ (such that $
    \lambda(\tau,\tau,x)=x$) and an autonomous differential equation $\dot{x} = g(x)$ with the flow $\phi(t,x)$ with $\phi(0,x) = x$.  Assume that $\lim\limits_{t\to\infty}f(t,x) = g(x)$. Then the following holds: for all $T<0$ and $\epsilon>0$, there exists a $\tau_0>-T$ such that for all $T'<-T$ and $x\in K$ with $K$ compact and $\phi(t,x)\in K$ for all $t\in[0,T']$ we have
    \[
    ||\lambda(t - \tau, \tau,x) - \phi(t,x)||<\epsilon \ \ \forall \tau\ge \tau_0 \ \text{and} \ t\in[0,T'].
    \]
    
\end{theorem}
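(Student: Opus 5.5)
The approach is a finite-time continuity argument: the autonomous flow $\phi$ and the time-shifted non-autonomous flow $\lambda(\cdot-\tau,\tau,\cdot)$ are compared on the fixed window $[0,T']$, and a Gronwall estimate makes them close once $\tau$ is large. I write $\psi_\tau(t,x):=\lambda(t-\tau,\tau,x)$ for the shifted solution. The autonomous trajectory $t\mapsto\phi(t,x)$ solves $\dot y=g(y)$ with $y(0)=x$. I read the shifted solution as the solution of $\dot y=f(t+\tau,y)$ started at $x$ at time $0$; this is the reparametrisation for which the statement is consistent. The shift by $\tau$ moves the non-autonomous vector field into the regime where $f(t+\tau,\cdot)\approx g$. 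I would assume, as is implicit in \cite{rasmussen2008bifurcations}, that $g$ is locally Lipschitz and that the convergence $f(t,\cdot)\to g$ is uniform on compact sets.

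\textbf{Step 1 (a compact neighbourhood).} Fix $\epsilon>0$, shrinking it if needed so that $\epsilon\le 1$. Let $K_1=\{y:\operatorname{dist}(y,K)\le 1\}$, which is compact, and let $L$ be a Lipschitz constant of $g$ on $K_1$. Set $\eta(s):=\sup_{y\in K_1}|f(s,y)-g(y)|$, so that $\eta(s)\to0$ as $s\to\infty$ by the uniform-on-compacts assumption.

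\textbf{Step 2 (Gronwall on the window).} Let $e(t)=\psi_\tau(t,x)-\phi(t,x)$, so $e(0)=0$. As long as $\psi_\tau(s,x)\in K_1$ for $s\in[0,t]$,
\[|e(t)|\le\int_0^t|f(s+\tau,\psi_\tau)-g(\psi_\tau)|\,ds+\int_0^t|g(\psi_\tau)-g(\phi)|\,ds\le t\sup_{s\ge\tau}\eta(s)+L\int_0^t|e(s)|\,ds,\]
and Gronwall gives
\[|e(t)|\le T'\Big(\sup_{s\ge\tau}\eta(s)\Big)e^{LT'}.\]
Now choose $\tau_0>-T$ such that $|T|\,(\sup_{s\ge\tau_0}\eta(s))\,e^{L|T|}<\epsilon$. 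Since $T'<-T=|T|$, this bound is uniform over all admissible $T'$ and over all $x$. It is uniform in $x$ because $K$ enters only through $K_1$ and $L$.

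\textbf{Step 3 (continuation, the main obstacle).} The Gronwall bound is only valid while $\psi_\tau$ stays in $K_1$, so the delicate point is justifying this and ruling out escape or blow-up of $\psi_\tau$ before time $T'$. I would use a standard continuation argument. Let $t^*$ be the first time $|e(t)|=\epsilon$. Before $t^*$ we have $\psi_\tau\in K_1$, because $\phi(t,x)\in K$ and $\epsilon\le1$. The estimate of Step 2 then gives $|e(t^*)|<\epsilon$, which is a contradiction. Hence $t^*>T'$, so the solution exists on $[0,T']$ and satisfies the required inequality for all $\tau\ge\tau_0$.

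In the applications of this paper we have $f(t,\rho)=g(\rho)+(\alpha_i(t)\rho_i)_i$, and the exponential decay of $\alpha$ from Lemma \ref{lma:AlphaExpDecay_UBD} gives $\eta(s)\le C_{K_1}e^{A-Bs}$. Substituting this into Step 2 even yields an explicit choice of $\tau_0$.
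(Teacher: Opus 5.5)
Your proof is correct. The paper gives no proof of this statement; it quotes it from \cite{rasmussen2008bifurcations}. The only trace of an argument is the splitting in the proof of Lemma~\ref{lem: dynamics LV}, which is exactly your Step~2 decomposition into the perturbation term $\int_0^t|f(s+\tau,\psi_\tau)-g(\psi_\tau)|\,ds$ and the term $\int_0^t|g(\psi_\tau)-g(\phi)|\,ds$.

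The two differ in how the second integral is handled. That lemma bounds it crudely by $\mathrm{diam}\,C\cdot t$. You bound it by $L\int_0^t|e(s)|\,ds$ and close with Gronwall and a first-exit continuation argument. Your treatment is what actually makes the error small uniformly over the window $[0,T']$ and over $x\in K$, and it is the standard route to the cited result.

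Your repairs of the garbled statement are the right ones:
\begin{itemize}
\item The shifted solution must be $\lambda(t+\tau,\tau,x)$, as the paper itself writes in the proof of Lemma~\ref{lem: dynamics LV}. Define $\psi_\tau$ that way rather than with $t-\tau$, to avoid the notational slip in your first paragraph.
\item $\tau_0$ must be allowed to depend on $K$.
\item $f(t,\cdot)\to g$ must converge uniformly on compact sets, with $g$ locally Lipschitz.
\end{itemize}

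The uniform-convergence hypothesis is genuinely necessary, not just convenient. Take $g\equiv 1$ and $f(t,x)=1-2\psi(tx)$, with $0\le\psi\le1$ supported in $[1,3]$ and $\psi=1$ on $[3/2,5/2]$. Then $f(t,x)\to g(x)$ pointwise. Yet for large $t$, every trajectory reaching $x=0$ is trapped in $0\le x\le 3/(2t)$, while $\phi$ carries it past $0$.

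One technical adjustment: if $D\neq\mathbb{R}^N$, replace the radius $1$ by some $\delta\in(0,1]$ with $K_\delta=\{y:\mathrm{dist}(y,K)\le\delta\}\subset D$, and take $\epsilon\le\delta$. Otherwise $L$ and $\eta$ need not be defined.
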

This theorem guarantees that one can find a time delay for every pair of trajectories of the perturbed and the unperturbed system that makes them stay in the $\epsilon$-vicinity of each other for some time. 

We will use the ideas behind the proof of this theorem to investigate the following: suppose that the two trajectories' starting points are  $\epsilon$ close; for how long can we guarantee that they stay close?

The main result of this section is the following lemma.
\begin{lemma}
\label{lem: dynamics LV}
    Assume that the trajectory of an unperturbed equation \eqref{eqn:PopDynFull_3} has a periodic cycle $C$ of width $diam (C)$  and that $M_1$ is the maximum value of $||\rho(t)||$ on this trajectory. Then, if the time delay $\tau$  was chosen to be sufficiently large, the time $T'$ that the perturbed trajectory will spend in the $2\epsilon$ vicinity of the unperturbed one will be $t\le \min\left\{\frac{\epsilon}{\mathrm{diam} C}, \frac{\epsilon}{M_1\max_{\xi\ge\tau}||\alpha(\xi)||}\right\}$.
\end{lemma}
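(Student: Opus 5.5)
The plan is a Grönwall-type estimate on the difference between the time-shifted perturbed trajectory and the unperturbed one, split into an autonomous drift term and a perturbation term. Write $F(\rho)$ for the right-hand side of \eqref{eqn:PopDynFull_3}, so that \eqref{eqn:PopDynFull_2} reads $\dot\rho = F(\rho) + \mathrm{diag}(\rho)\alpha(t)$.

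Fix the delay $\tau$ and compare $\rho(t+\tau)$, started at $\rho(\tau)$, with $\widetilde\rho(t)$, started at $\widetilde\rho(0)$. Assume the initial points are $\epsilon$-close and that $\widetilde\rho(0)$ lies on the cycle $C$. Set $e(t) = \rho(t+\tau) - \widetilde\rho(t)$. Integrating both equations gives
\[
e(t) = e(0) + \int_0^t \bigl(F(\rho(s+\tau)) - F(\widetilde\rho(s))\bigr)\,ds + \int_0^t \mathrm{diag}(\rho(s+\tau))\,\alpha(s+\tau)\,ds .
\]

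The key steps, in order, are as follows.

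First, bound the perturbation integral. As long as the perturbed trajectory stays within $2\epsilon$ of $C$, we have $\|\rho(s+\tau)\|\le M_1+2\epsilon$, which is essentially $M_1$. The last integral is then at most $t\,M_1\max_{\xi\ge\tau}\|\alpha(\xi)\|$. Requiring this to be $\le \epsilon$ gives the second constraint in the minimum. Taking $\tau$ large makes $\max_{\xi\ge\tau}\|\alpha(\xi)\|$ small by Lemma \ref{lma:AlphaExpDecay_UBD}, and this is why $\tau$ must be "sufficiently large".

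Second, control the autonomous drift term. Both $\rho(s+\tau)$ and $\widetilde\rho(s)$ remain in the $2\epsilon$-tube around the compact cycle $C$. I would bound the accumulated displacement of the autonomous flow over $[0,t]$ by $t\,\mathrm{diam}\, C$, in the sense of how far the vector field can transport points along and near $C$. Requiring this to be $\le \epsilon$ gives the first constraint, $t\le \epsilon/\mathrm{diam}\, C$. Adding the three contributions $\epsilon$ (initial) $+\,\epsilon$ (drift) $+\,\epsilon$ (perturbation) needs tidying to reach exactly $2\epsilon$. I would absorb the initial offset by choosing the initial closeness $\epsilon/2$ or by using a bootstrap argument.

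Third, close the argument by continuity (bootstrap). Let $T^*$ be the first exit time from the $2\epsilon$-neighbourhood. On $[0,T^*]$ the bounds above hold. Hence $T^*\ge\min\{\epsilon/\mathrm{diam}\, C,\ \epsilon/(M_1\max_{\xi\ge\tau}\|\alpha(\xi)\|)\}$, which is the asserted guaranteed time $T'$.

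The main obstacle is the drift step. Honestly, $F(\rho(s+\tau))-F(\widetilde\rho(s))$ is controlled by the Lipschitz constant $L$ of $F$ on the tube, which yields exponential growth $e^{Lt}$ rather than a linear-in-$t$ bound with $\mathrm{diam}\, C$. To obtain the stated form, I would first try a crude estimate that does not exploit cancellation. Near $C$, the cycle is a bounded orbit, so within time $t$ the two points drift apart by at most a quantity proportional to $t$ times the spatial extent of the cycle. I would make this rigorous by restricting $t$ to be at most a period and using that both points stay in the tube of width comparable to $\mathrm{diam}\, C$.

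If this fails, the fallback is to accept a Grönwall factor, which gives the statement up to a constant depending on $L$. This is presumably the spirit of the lemma, read as an order-of-magnitude estimate.
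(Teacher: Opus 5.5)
Your decomposition is the paper's. You use the same integral identity for $e(t)$ and the same split into a perturbation term bounded by $t\,M_1\max_{\xi\ge\tau}\|\alpha(\xi)\|$ and a drift term bounded by $t\,\mathrm{diam}\,C$, each required to be at most $\epsilon$. The one structural difference is your $\epsilon$-offset between starting points. The paper's proof compares $\lambda(t+\tau,\tau,x)$ with $\phi(t,x)$ from the \emph{same} point $x$, so there is no initial term and the two contributions give $2\epsilon$ directly. You should adopt that convention rather than tidy up a third $\epsilon$. An offset that does not shrink as $\tau$ grows is carried apart by the autonomous flow on a time scale set by the size of $g$, not by $\mathrm{diam}\,C$ (see the rescaling below). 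With such an offset the time bound $\epsilon/\mathrm{diam}\,C$ can fail even when $\alpha\equiv 0$, and starting $\epsilon/2$ apart does not fix this.

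The drift step is exactly where the paper's proof is also unsupported: it simply asserts $\int_0^t\|g(\lambda(s+\tau,\tau,x))-g(\phi(s,x))\|\,ds\le t\,\mathrm{diam}\,C$. Your suspicion is right. Multiplying every $\lambda_i$ and $a_{ij}$ in \eqref{eqn:PopDynFull_3} by $k>0$ leaves the orbit $C$, hence $\mathrm{diam}\,C$, unchanged but multiplies $g$ by $k$. So no bound of $\|g(y)-g(z)\|$ by $\mathrm{diam}\,C$ that uses only the fact that $y,z$ lie near $C$ can hold, and restricting $t$ to one period does not help. What controls the drift is the closeness of the two trajectories, through the Lipschitz constant $L$ of $g$ on the closed $2\epsilon$-neighbourhood of $C$. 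With the common starting point, your Gr\"onwall fallback then gives the statement as written, not merely up to a constant. Up to the first exit time from the $2\epsilon$-tube (your bootstrap) one has $\|e(t)\|\le L\int_0^t\|e(s)\|\,ds+(M_1+2\epsilon)\max_{\xi\ge\tau}\|\alpha(\xi)\|\,t$. Hence $\|e(t)\|\le(M_1+2\epsilon)\max_{\xi\ge\tau}\|\alpha(\xi)\|\,t\,e^{Lt}$. On the fixed horizon $t\le\epsilon/\mathrm{diam}\,C$ this is at most $\epsilon$ once $\tau$ is large enough, which is exactly the freedom the hypothesis ``$\tau$ sufficiently large'' provides; the threshold for $\tau$ now depends on $L$. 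The bootstrap therefore closes for every $t\le\min\{\ldots\}$, which repairs the step the paper leaves unjustified.
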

\begin{proof}
For brevity, we stick to the notation of the theorem above, and denote the flow of the perturbed system by $\lambda(t,\tau,x)$ and the unperturbed by $\phi(t,x)$, with the left hand side of \eqref{eqn:PopDynFull} denoted by $f(t,x)$ and the right hand side of the unperturbed equation by $g(x)$.

When the $\omega$-limit set of the unperturbed trajectory is a point, it is clear that the two solutions will stay in the $\epsilon-$vicinity of each other indefinitely long, provided they started sufficiently close. In the case when the $\omega$-limit set is a cycle, things are a bit different. 

Using the definition of the flow, we can write the following estimations for the distance between the solutions:
\begin{equation*}
    \begin{split}
        ||\lambda(t + \tau, \tau,x) - \phi(t,x)||&\le \int\limits_0^t ||f(\lambda(s + \tau,s,x)) -g(\phi(s,x))||\,d s\\&\le\int\limits_0^t ||f(\lambda(s + \tau,s,x)) -g(\lambda(s + \tau,s,x))||\,d s \\&+ \int\limits_0^t ||g(\lambda(s + \tau,s,x)) -g(\phi(s,x))||\, d s.
    \end{split}
\end{equation*}
The first summand on the right hand side is proportional to $\alpha:$
\begin{equation*}
    \begin{split}
       \int\limits_0^t ||f(\lambda(s + \tau,\tau,x)) -g(\lambda(s + \tau,\tau,x))||d s & = \int\limits_0^t||\alpha(s + \tau)\rho(\lambda(s + \tau,\tau,x){)} ||\, d s.
    \end{split}
\end{equation*}
Here, the integrand is a vector with entries $(\alpha_i {\rho}_i)_i$ and the norm is the standard Euclidean vector norm.  Assume that $||{\rho}_i(t)||<M_1$ for some constant $M_1$ -- this follows from the $\omega$-limit set being the periodic trajectory. Then 
\[
\int\limits_0^t ||f(\lambda(s + \tau,s,x)) -g(\lambda(s + \tau,s,x))||d s \le M_1 t \max_{\xi\ge \tau}||\alpha(\xi)||. 
\]
For the second part, we have 
\begin{equation*}
    \begin{split}
       \int\limits_0^t ||g(\lambda(s + \tau,s,x)) -g(\phi(s,x))||d s&\le \mathrm{diam}\ C*t ,
    \end{split}
\end{equation*}
where $\mathrm{diam}\ C$ is the diameter of the periodic trajectory. 

Therefore, if we want the perturbed trajectory to stay in the $2\epsilon$-neighbourhood of the unperturbed one, it must hold that 
\[
t\le \min\left\{\frac{\epsilon}{\mathrm{diam} C}, \frac{\epsilon}{M_1\max_{\xi\ge\tau}||\alpha(\xi)||}\right\}.
\]
\end{proof}
\section*{Acknowledgements}
MHD was supported by EPSRC grant EP/Y008561/1. 



\printbibliography

\end{document}